\title{
The artificial compressibility approximation for MHD equations in unbounded domain}
\author{\textbf{Donatella Donatelli}\\
        {\small Dipartimento di Ingegneria e Scienze dell'Informazione e Matematica}\\
       {\small Universit\`a degli Studi dell'Aquila}\\
       {\small 67100 L'Aquila, Italy}\\
        $\scriptstyle\mathtt{\{donatell@univaq.it\}}$\\}
\date{}
\newcommand{\e}{\varepsilon}		       
\newcommand{\R}{\mathbb{R}}
\newcommand{\ue}{u^{\varepsilon}}
\newcommand{\bu}{{\bf u}}
\newcommand{\bue}{{\bu^{\varepsilon}}}
\newcommand{\bB}{{\bf B}}
\newcommand{\bZ}{{\bf Z}}
\newcommand{\bBe}{{\bB^{\varepsilon}}}
\newcommand{\ut}{\tilde{\bu}}
\newcommand{\Bt}{\tilde{\bB}}
\newcommand{\pe}{p^{\varepsilon}}
\newcommand{\phie}{\phi^{\varepsilon}}
\newcommand{\phit}{\tilde{\phi}}
\newcommand{\pt}{\tilde{p}}
\newcommand{\dive}{\mathop{\mathrm {div}}}
\newcommand{\curl}{\mathop{\mathrm {curl}}}
\newtheorem{theorem}{Theorem}[section]   
\newtheorem{lemma}[theorem]{Lemma}
\newtheorem{proposition}[theorem]{Proposition}
\theoremstyle{definition}
\newtheorem{definition}[theorem]{Definition}
\begin{document}

\maketitle
\begin{abstract}
In this paper we analyze a method of   to approximation for  the  weak solutions of the incompressible magnetohydrodynamic equations (MHD) in unbounded domains. In particular we describe an hyperbolic version of the so called artificial compressibility method adapted to the MHD system. By exploiting the wave equation structure of the approximating system we achieve the convergence of the approximating sequences by means of dispersive  estimate of Strichartz type. We prove that the soleinoidal component of the approximating  velocity and magnetic fields is relatively compact and converges strongly to a  weak solution of the MHD equation. 
\medbreak 
\textbf{Key words and phrases:} 
magnetohydrodynamic equations; hyperbolic equations; wave equations.
\medbreak
\textbf{1991 Mathematics Subject Classification.} Primary 35L65; Secondary
35L40, 76R50.
\end{abstract}
\tableofcontents
\newpage
\section{Introduction}

This paper is concerned with the analysis and description of  a type of approximation for the weak solutions of the $3-D$ incompressible magnetohydrodynamic equations (MHD). The magnetohydrodynamic system describes the macroscopic state of an electrically conductor, viscous and incompressible fluid and consists of   the following set of equations
\begin{equation}\label{MHD}
\begin{cases}
\displaystyle{\partial_{t}\bu-\frac{1}{R_{e}}\Delta \bu+(\bu\cdot\nabla)\bu-S(\bB\cdot\nabla)\bB+\nabla \pi+\frac{S}{2}\nabla|\bB|^{2} = 0}\\
 \displaystyle{\partial_{t}\bB-\frac{1}{R_{m}}\Delta \bB+(\bu\cdot\nabla)\bB-(\bB\cdot\nabla)\bu = 0}\\
\dive\bu =0\\
\dive\bB =0
\end{cases}
\end{equation}
endowed with the following initial data
\begin{equation*}
\bu(x,0) = \bu_{0}(x),\quad \bB(x,0) =\bB_{0}(x),
\end{equation*}
where $x\in \R^{3}$, $t\geq 0$. Here $\bu$, $p$ and $\bB$ are a-dimensional quantities corresponding respectively to the velocity of the fluid, its pressure and the magnetic field. The a-dimensional number $R_{e}$ is the Reynolds number, $R_{m}$ is the magnetic Reynold number,  $M$ is the Hartmann number and $S$ is defined as $S=\frac{M^2}{R_{e}R_{m}}$. \\
For simplicity from now on  we set 
\begin{equation*}
R_{e}=R_{m}=S=1,\quad p=\pi+\frac{|B|^2}{2}.
\end{equation*}
 In particular the first equation in \eqref{MHD} is the balance of the momentum and the second is the equation for the induction,  the equation $\dive\bu=0$ is the incompressibility constraint for the fluid and the equation $\dive\bB=0$ derives from Maxwell equations, see \cite{LL}.\\
 There have been extensive mathematical studies on the solutions to MHD equations \eqref{MHD}. In particular, Duvaut and Lions \cite{DL72} constructed a global weak solution and a local strong solution to the initial boundary value problem. Properties of such solutions have been examined by Sermange and Temam in \cite{ST83}. Furthermore, some sufficient conditions for smoothness were presented for the weak solution to the MHD equations in \cite{HX05b}, \cite{HX05a}.
For completeness we recall here the notion of  weak solutions for the system \eqref{MHD}.
\begin{definition}
\label{weaksol}
We say that  a pair $\bu, \bB$ is a  weak solution to the MHD equations \eqref{MHD} if they 
satisfy  \eqref{MHD} in the sense of distributions, namely
\begin{align*}
&\int_{0}^{T}\!\!\int_{\R^{d}}\left(\nabla \bu\cdot\nabla\varphi+\bB_{i}\bB_{j}\partial_{i}\varphi_{j} -\bu_{i}\bu_{j}\partial_{i}\varphi_{j}-u\cdot
\frac{\partial \varphi}{\partial t}\right) dxdt
=\int_{\R^{d}}\bu_{0}\cdot \varphi dx,
\end{align*}
\begin{align*}
&\int_{0}^{T}\!\!\int_{\R^{d}}\left(\nabla \bB\cdot\nabla\varphi -\bu_{i}\bB_{j}\partial_{i}\varphi_{j}+\bB_{i}\bu_{j}\partial_{i}\varphi_{j}-\bu\cdot
\frac{\partial \varphi}{\partial t}\right) dxdt
=\int_{\R^{d}}\bB_{0}\cdot \varphi dx,
\end{align*}

for all $\varphi\in C^{\infty}_{0}(\R^{d}\times[0,T])$, $\dive \varphi =0$ and
\begin{equation*}
\dive \bu=0 \quad \dive\bB=0\qquad \text{in $\mathcal{D}'(\R^{d}\times[0,T])$}.
\end{equation*} 
Moreover the following energy inequality holds
\begin{align*}
\frac{1}{2}\int_{\R^{d}}&(|\bu(x,t)|^{2}+|\bB(x,t)|^{2})dx+\int_{0}^{t}\!\!\int_{\R^{d}}(|\nabla \bu(x,s)|^{2}+|\nabla \bB(x,s)|^{2})dxds\\ \leq
&\frac{1}{2}\int_{\R^{d}}(|\bu_{0}|^{2}+|\bB_{0}|^{2})dx ,\qquad \text{for all $t\geq 0$}.
\end{align*}

\end{definition}

Motivated by many applications there have been also considerable efforts to develop numerical approximation methods for the system \eqref{MHD}. One of the major difficulty regards the development of numerical schemes including in an efficient way the incompressibility constraints. In fact the ``$\dive$''  operator can never be exactly zero, discretization errors accumulate at each iteration and  after a significant amount of error accumulation, the approximating algorithm breaks down.
In the case of the incompressible Navier Stokes equations in a bounded domain, Chorin \cite{Ch68}, \cite{Ch69}, Temam \cite{Tem69a}, \cite{Tem69b} and Oskolkov \cite{Osk},  to overcome the computational  difficulties connected with   the incompressibility constraints introduced what they named {\em ``artificial compressibility approximation''}. They considered a family of perturbed systems, depending on a positive parameter $\e$, which approximate in the limit the Navier Stokes equation and which contain a sort of ``linearized'' compressibility condition, namely the following system
\begin{equation}
\begin{cases}
\displaystyle{\partial_{t}\bue+\nabla \pe=\mu\Delta \bue-\left(\bue\cdot\nabla\right)\bue -\frac{1}{2}(\dive \bue)\bue}\\
\e\partial_{t}\pe+ \dive \bue=0,
\end{cases}
\label{1.1}
\end{equation}
where $\e$ is the {\em artificial compressibility}.
The papers of Temam \cite{Tem69a}, \cite{Tem69b} and his book \cite{Tem01} discuss the convergence of these approximations on bounded domains by exploiting the classical Sobolev compactness embedding and they recover compactness in time by the well known J.L. Lions \cite{L-JL59} method of fractional derivatives. 
The same system was used in \cite{DM06} and in  \cite{DM10}in the case of the Navier Stokes equations in the whole space $\R^{3}$ and in exterior domain and  then modified in a suitable way in \cite{Don08}  for the Navier Stokes Fourier system in $\R^{3}$.
As we can notice the constraint ``$\dive u=0$''  has been replaced by the evolution equation
\begin{equation*}
\partial_{t}\pe=-\frac{1}{\e}\dive\ue,
\end{equation*}
which can be seen as the linearization  around a constant state of the continuity equation in the case of a compressible fluid.
Concerning the first equation of the system \eqref{1.1} we can observe that compared to the equation of the balance of momentum in the incompressible Navier Stokes equations it has the extra term $-1/2(\dive\ue)\ue$ which has been added as a correction  to avoid the paradox of increasing the kinetic energy along the motion. In analogy of what has been done for the incompressible Navier Stokes  we try to adapt this approach to the MHD equations \eqref{MHD}, keeping in mind the {\rm artificial compressibility term} we try to add to our system a similar term for the magnetic field, namely we introduce the following approximating system
\begin{equation}
\begin{cases}
\displaystyle{\partial_{t}\bue+\nabla \pe=\mu\Delta \bue-\left(\bue\cdot\nabla\right)\bue -\frac{1}{2}(\dive \bue)\bue+\curl \bBe \times \bBe}\\
\displaystyle{\partial_{t}\bBe+\nabla \phi^{\varepsilon}=\Delta \bBe+\curl (\bue\times \bBe)}\\
\e\partial_{t}\pe+ \dive \bue=0\\
\e\partial_{t}\phi^{\varepsilon}+\dive \bBe=0
\end{cases}
\label{MHDA}
\end{equation}
where the introduced new variable $\phie$ is a {\em scalar potential function}. In the  incompressible Navier Stokes equations, $\pe$ plays the role of a scalar potential that constrains the velocity to the space of divergence free vector fields, here in the formulation \eqref{MHDA}, the scalar potential, $\phie$, plays the same role but, this time, with the aim to constrain the magnetic field, $\bBe$, to the space of divergence free fields. Hence, an analogous artificial compressibility term, which consists of the time derivative of the scalar potential function, $\phie$, has been added to the solenoidal constraint equation, and similarly, a $\nabla\phie$ term with some other stabilization terms are  included in the induction equation.  A similar approximating system has been introduced in  \cite{YAL}and \cite{YALQD}  for a MHD system without viscosity but the analysis is performed only from a numerical point of view. 
In this paper we analyze  rigorously the convergence of the system \eqref{MHDA} towards the system \eqref{MHD} as $\e\to 0$ in the case of the whole space .
Furthermore we assign to the system \eqref{MHDA} the following  initial conditions
\begin{equation}
\begin{split}
&\bue(x,0)=\bu^{\e}_{0}(x),\  \bBe(x,0)=\bB^{\e}_{0}(x),\\
 &\pe(x,0)=\pe_{0}(x),\ \phie(x,0)=\phie_{0}(x).
\end{split}
\label{3.3}
\end{equation}
The limiting behavior as $\e\downarrow 0$ of the initial data  \eqref{3.3} deserves a little discussion. Indeed \eqref{MHDA} requires the initial conditions \eqref{3.3}
while the MHD equations \eqref{MHD} require only the two initial condition for the velocity $\bu$ and for the magnetic field $\bB$. Hence, our approximation will be consistent if the initial datum on the pressure $\pe$ and of the potential $\phie$ will be eliminated by an ``initial layer'' phenomenon. Since in the limit we have to deal with weak solutions in the sense of the Definition \ref{weaksol} it is reasonable to require the finite energy constraint to be satisfied by the approximating sequences $(\bue,\bBe, \pe, \phie)$. So we can deduce a natural behavior to be imposed on the initial data $(\bu^{\e}_{0}, \bB^{\e}_{0}, \pe_{0}, \phie_{0})$, namely
\begin{equation}
\tag {\bf{ID}}
\left .
\begin{split}
& \bu^{\e}_{0}=\bue(\cdot, 0)\longrightarrow \bu_{0}=\bu(\cdot ,0)\ \text{strongly in}\  L^{2}(\R^{3})\\
& \bB^{\e}_{0}=\bBe(\cdot, 0)\longrightarrow \bB_{0}=\bB(\cdot ,0)\ \text{strongly in}\  L^{2}(\R^{3})\notag\\ 
&\sqrt{\e}\pe_{0}=\sqrt\e \pe(\cdot,0)\longrightarrow 0\  \text{strongly in} \  L^{2}(\R^{3})\notag\\
&\sqrt{\e}\phie_{0}=\sqrt\e \phie(\cdot,0)\longrightarrow 0\  \text{strongly in} \  L^{2}(\R^{3})\notag.
\end{split}
\right \}
\label{ID}
\end{equation}
Let us remark that  the convergence of $\sqrt{\e}\pe_{0}$ and $\sqrt{\e}\phie_{0}$ to $0$ is necessary to avoid  the presence of concentrations of energy in the limit.
Now we can state our main result.

\begin{theorem}
Let $(\bue, \bBe,\pe,\phie)$  be a sequence of weak solution in $\R^{3}$ of the system \eqref{MHDA}, assume that the initial data satisfy (ID). Then 
\begin{itemize}
  \item [\bf{(i)}] There exist $\bu, \bB \in L^{\infty}([0,T];L^{2}(\R^{3}))\cap L^{2}([0,T];\dot H^{1}(\R^{3}))$ such that 
  \begin{equation*}
\bue\rightharpoonup \bu \quad \text{weakly in $L^{2}([0,T];\dot H^{1}(\R^{3}))$}.
\end{equation*}
 \begin{equation*}
\bBe\rightharpoonup \bB \quad \text{weakly in $L^{2}([0,T];\dot H^{1}(\R^{3}))$}.
\end{equation*}
  \item [\bf{(ii)}] The gradient components $Q\bue$, $Q\bBe$ of the vector fields $\bue$, $\bBe$  satisfy respectively
  \begin{equation*}
Q\bue\longrightarrow 0\quad \text{ strongly in $L^{2}([0,T];L^{p}(\R^{3}))$, for any $p\in [4,6)$}.
\end{equation*}
\begin{equation*}
Q\bBe\longrightarrow 0\quad \text{ strongly in $L^{2}([0,T];L^{p}(\R^{3}))$, for any $p\in [4,6)$}.
\end{equation*}
 \item [\bf{(iii)}] The divergence free components $P\bue$,  $P\bBe$ of the vector fields $\bue$, $\bBe$ satisfy
   \begin{equation*}
P\bue\longrightarrow P\bu=\bu\quad \text{strongly  in $L^{2}([0,T];L^{2}_{loc}(\R^{3}))$}.
\end{equation*}
\begin{equation*}
P\bBe\longrightarrow P\bB=\bB\quad \text{strongly  in $L^{2}([0,T];L^{2}_{loc}(\R^{3}))$}.
\end{equation*}
\item [\bf{(iv)}]
 $(\bu=P\bu, \bB=P\bB)$ is a Leray weak solution to the incompressible magnetohydrodynamic system
\begin{equation*}
P(\partial_{t} \bu-\Delta \bu+(\bu\cdot\nabla)\bu-(\bB\cdot\nabla)\bB)=0,
\end{equation*}
\begin{equation*}
\partial_{t} \bB-\Delta \bB+(\bu\cdot\nabla)\bB-(\bB\cdot\nabla)\bu=0,
\end{equation*}
and the following energy inequality holds
\begin{align}
\frac{1}{2}\int_{\R^{3}}(|\bu(x,t)|^{2}&+|\bB(x,t)|^{2})dx+\int_{0}^{T}\!\!\int_{\R^{3}}(|\nabla \bu(x,t)|^{2}+|\nabla \bB(x,t)|^{2})dxdt\notag\\
&\leq 
\frac{1}{2}\int_{\R^{3}}(|\bu(x,0)|^{2}+|\bB(x,0)|^{2})dx.
\label{en}
\end{align}
\end{itemize}
\label{tM}
\end{theorem}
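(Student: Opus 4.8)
The plan is to run the classical artificial compressibility program---$\e$-uniform energy bounds, Helmholtz decomposition into solenoidal and gradient parts, dispersive decay of the gradient part, and compactness of the solenoidal part---adapted to the coupled structure of \eqref{MHDA}. First I would derive the basic energy estimate: testing the first equation with $\bue$, the second with $\bBe$, the third with $\pe$ and the fourth with $\phie$, and summing. The correction term $-\tfrac12(\dive\bue)\bue$ is built precisely so that $-\int(\bue\cdot\nabla)\bue\cdot\bue-\tfrac12\int(\dive\bue)|\bue|^2=0$; the acoustic equations convert $\int\nabla\pe\cdot\bue$ and $\int\nabla\phie\cdot\bBe$ into $\tfrac{\e}{2}\partial_t\int|\pe|^2$ and $\tfrac{\e}{2}\partial_t\int|\phie|^2$; and the Lorentz and induction couplings cancel, since $\int(\curl\bBe\times\bBe)\cdot\bue+\int\curl(\bue\times\bBe)\cdot\bBe=0$ by the triple-product identity. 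This yields the $\e$-uniform bound
\begin{equation*}
\frac12\int_{\R^3}(|\bue|^2+|\bBe|^2+\e|\pe|^2+\e|\phie|^2)\,dx+\int_0^t\!\!\int_{\R^3}(\mu|\nabla\bue|^2+|\nabla\bBe|^2)\,dx\,ds\le C,
\end{equation*}
with $C$ controlled by (ID). Hence $\bue,\bBe$ are bounded in $L^\infty(L^2)\cap L^2(\dot H^1)$ and $\sqrt{\e}\pe,\sqrt{\e}\phie$ in $L^\infty(L^2)$; extracting subsequences gives the weak limits of part \textbf{(i)}. Moreover $\dive\bue=-\e\partial_t\pe$ and $\e\pe=\sqrt\e(\sqrt\e\pe)\to0$, so $\dive\bu=\dive\bB=0$ in $\mathcal D'$.

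The heart of the argument, and what I expect to be the main obstacle, is part \textbf{(ii)}. Applying the Leray projector splits $\bue=P\bue+Q\bue$; eliminating $\dive\bue$ between the momentum equation and its continuity equation shows that the acoustic pair $(\pe,Q\bue)$ solves a wave equation with speed $1/\sqrt{\e}$,
\begin{equation*}
\e\,\partial_{tt}\pe-\Delta\pe=-\dive F^\e,
\end{equation*}
where $F^\e$ denotes the right-hand side of the momentum equation, and similarly for $(\phie,Q\bBe)$. As $\e\to0$ these waves propagate arbitrarily fast and disperse. The key technical point is to bound the source terms in the dual Strichartz norm using only the energy estimate and Sobolev embedding, and then to invoke the Strichartz estimates for the (rescaled) wave operator to conclude that $Q\bue,Q\bBe\to0$ strongly in $L^2([0,T];L^p(\R^3))$ for $p\in[4,6)$, the endpoints reflecting wave admissibility together with $\dot H^1\hookrightarrow L^6$ in dimension three. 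It is exactly the unboundedness of the domain that makes dispersion the right tool here: on a torus one would instead have to analyze the resonances of the two acoustic potentials $\pe,\phie$, whereas here every resonant product is tamed automatically because each gradient factor vanishes strongly.

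It then remains to upgrade the solenoidal parts. Since $P$ annihilates $\nabla\pe$, the momentum equation gives $\partial_t P\bue=P\big(\mu\Delta\bue-(\bue\cdot\nabla)\bue-\tfrac12(\dive\bue)\bue+\curl\bBe\times\bBe\big)$, whose right-hand side is bounded in $L^2([0,T];H^{-s}_{loc})$ for a suitable $s$ by Step~1 and Sobolev embedding; combined with the bound on $P\bue$ in $L^2(H^1_{loc})$, the Aubin--Lions--Simon lemma yields relative compactness and hence strong convergence $P\bue\to\bu$ in $L^2([0,T];L^2_{loc}(\R^3))$ (the strong limit agrees with the weak limit $\bu$, and $P\bu=\bu$ since $\dive\bu=0$), and likewise $P\bBe\to\bB$. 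This is part \textbf{(iii)}.

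Finally, to obtain \textbf{(iv)}, I would insert $\bue=P\bue+Q\bue$ into every quadratic term and use $\curl\bBe\times\bBe=(\bBe\cdot\nabla)\bBe-\tfrac12\nabla|\bBe|^2$ (the gradient absorbed into the pressure) together with the analogous expansion of $\curl(\bue\times\bBe)$ whose divergence terms vanish in the limit. The products $P\bue\otimes P\bue$ and $P\bue\otimes P\bBe$ converge by the strong $L^2_{loc}$ convergence of Step~3, every product containing a factor $Q\bue$ or $Q\bBe$ vanishes by Step~2, and the correction term $\tfrac12(\dive\bue)\bue\to0$ since $\dive\bue\to0$. Tested against divergence-free $\varphi$ this reproduces the weak formulation of \eqref{MHD}, and the energy inequality \eqref{en} follows from weak lower semicontinuity of the $L^2$ and $\dot H^1$ norms applied to the uniform estimate of Step~1, the right-hand side passing to the limit because $\bu^\e_0\to\bu_0$, $\bB^\e_0\to\bB_0$ strongly in $L^2$ while the $\sqrt\e\pe_0,\sqrt\e\phie_0$ contributions disappear.
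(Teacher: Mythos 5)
Your overall strategy coincides with the paper's: the energy identity with exactly the cancellations you describe (the paper's identities \eqref{v1}--\eqref{v3}), the wave-equation structure of the acoustics treated by Strichartz estimates, a Simon-type compactness argument for the solenoidal parts, and limit passage via the $P/Q$ decomposition plus weak lower semicontinuity. Your steps for \textbf{(i)}, \textbf{(iii)} and \textbf{(iv)} are sound; in particular your Aubin--Lions--Simon route for \textbf{(iii)} is interchangeable with the paper's explicit time-translation estimate $\|P\bBe(t+h)-P\bBe(t)\|_{L^{2}([0,T]\times\R^{3})}\lesssim h^{1/7}$, since both are applications of the same compactness criterion (Proposition \ref{S}).

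The gap is in \textbf{(ii)}, which you yourself call the heart of the argument: ``invoke the Strichartz estimates \dots to conclude that $Q\bue,Q\bBe\to0$ strongly in $L^{2}([0,T];L^{p}(\R^{3}))$'' does not work as stated. The sources of your wave equation $\e\partial_{tt}\pe-\Delta\pe=-\dive F^{\e}$ are not admissible dual-Strichartz data at regularity zero: the viscous part contributes $-\Delta\dive\bue$ (after rescaling, $-\Delta\dive\ut$), which the energy bound places only in $L^{2}_{\tau}H^{-2}_{x}$. Consequently Strichartz must be applied to $\Delta^{-1}\pt$, and after undoing the time rescaling it yields only the $\e$-weighted negative-order bounds $\e^{3/8}\|\pe\|_{L^{4}_{t}W^{-2,4}_{x}}+\e^{7/8}\|\partial_{t}\pe\|_{L^{4}_{t}W^{-3,4}_{x}}\lesssim 1$ (Theorems \ref{t2B} and \ref{t2}). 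Via $Q\bue=-\e\nabla\Delta^{-1}\partial_{t}\pe$ this gives decay of $Q\bue$ at rate $\e^{1/8}$ but only in $W^{-2,4}$-type norms, not in $L^{p}$. The missing idea is the interpolation device that converts this very weak decay into the claimed convergence: mollify at scale $\alpha$ and split $\|Q\bBe\|_{L^{2}_{t}L^{p}_{x}}\leq\|Q\bBe-Q\bBe\ast\psi_{\alpha}\|+\|Q\bBe\ast\psi_{\alpha}\|$, estimate the first term by $\alpha^{1-3\left(\frac{1}{2}-\frac{1}{p}\right)}\|\nabla\bBe\|_{L^{2}_{t,x}}$ (this forces $p<6$) and the second by $\e^{1/8}\alpha^{-2-3\left(\frac{1}{4}-\frac{1}{p}\right)}$ via Young's inequality against the $W^{-3,4}$ Strichartz bound (this forces $p\geq4$), then optimize $\alpha=\e^{1/18}$ to obtain $\|Q\bBe\|_{L^{2}_{t}L^{p}_{x}}\lesssim\e^{\frac{6-p}{36p}}$ (Lemma \ref{ly} and Propositions \ref{p2B}, \ref{p2}). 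Your parenthetical ``$\dot H^{1}\hookrightarrow L^{6}$'' gestures at the right extra ingredient, but without this two-scale argument there is no route from the dispersive estimates to \textbf{(ii)}, and hence none to the limit passage in the nonlinear terms in \textbf{(iv)} either.
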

One of the major issues connected with the proof of such kind of result is the presence of acoustic waves for the pressure $\pe$ and for the potential $\phie$ which exhibit fast oscillations when $\e\to 0$. As we can see by considering together the  equations $\eqref{MHDA}_{1}$ and $\eqref{MHDA}_{3}$ or $\eqref{MHDA}_{2}$ and $\eqref{MHDA}_{4}$ as $\e$ goes to 0 at the level of the pressure or of the potential, the acoustic waves propagate with high speed $\frac{1}{\e}$ in the space domain. Because of the fast propagating of the acoustics one expects the velocity $\bue$ and the magnetic field $\bBe$ to converge only weakly. In fact if, for instance,  we project the third equation of \eqref{MHDA} on the space of divergence free vector fields we get  that 
$$\partial_{t}P\bBe=P(-\Delta \bBe+\curl(\bue\times \bBe)),$$
in other words the soleinodal component $P\bBe$ is relatively compact in the time variable. Therefore time oscillations are supported by the gradient part of the field $\bBe$ and are related to the propagation of acoustic waves.
We overcome this trouble by using the dispersion of these waves at infinity yielding the strong convergence of $\bue$  and $\bBe$ in space and time.
In particular the system \eqref{MHDA} will be discussed as a semilinear wave type equation for the pressure function and for the potential function and the dispersive estimates will be carried out by using the $L^{p}$-type estimates due to Strichartz \cite{GV95, KT98, S77}. The connection with the dispersive analysis of the acoustic wave equation has also been considered to study the incompressible limit problem. Similar phenomena appear also in the modeling the Debye screening effect for semi- conductor devices, \cite{DM08}, \cite{DM12}. In particular a similar wave equation structure has been exploited in various way by the paper of P.L.Lions and Masmoudi \cite{L-P.L.M98}, Desjardin, Grenier, Lions, Masmoudi \cite{DGLM}, Desjardin Grenier \cite{DG99}.It is worth to mention here that this type of singular limits from hyperbolic to parabolic systems is not covered and doesn't fits in the general framework of diffusive limits analyzed in \cite{DM04}.\\

This paper is organized as follows. In Section 2 we recall the notations, some mathematical tools needed in the paper and recall same basic definitions. In Section 3 we recover all the uniform bounds that can be deduced by the only use of the energy inequality. Section 4 is devoted to the study of the acoustic equations. In Section 5 we prove the strong convergence of the velocity and magnetic field. Finally in Section 6 we give the proof of the main result. The last Section 7 is devoted to the study of the artificial compressibility approximation  for general unbounded domain, as for example an exterior domain.

\section{Notations}

For convenience of the reader we establish some notations and recall some basic facts that will be useful in the sequel.\\
We will denote by  $\mathcal{D}(\R ^d \times \R_+)$
 the space of test function
$C^{\infty}_{0}(\R^d \times \R_+)$, by $\mathcal{D}'(\R^d \times
\R_+)$ the space of Schwartz distributions and $\langle \cdot, \cdot \rangle$
the duality bracket between $\mathcal{D}'$ and $\mathcal{D}$ and by $\mathcal{M}_{t}X'$ the space $C_{c}^{0}([0,T];X)'$. Moreover
$W^{k,p}(\R^{d})=(I-\Delta)^{-\frac{k}{2}}L^{p}(\R^{d})$ and $H^{k}(\R^{d})=W^{k,2}(\R^{d})$ denote the nonhomogeneous Sobolev spaces for any $1\leq p\leq \infty$ and $k\in \R$. $\dot W^{k,p}(\R^{d})=(I-\Delta)^{-\frac{k}{2}}L^{p}(\R^{d})$ and $\dot H^{k}(\R^{d})=W^{k,2}(\R^{d})$  denote the homogeneous Sobolev spaces. The notations
 $L^{p}_{t}L^{q}_{x}$ and $L^{p}_{t}W^{k,q}_{x}$ will abbreviate respectively  the spaces $L^{p}([0,T];L^{q}(\R^{d}))$, and $L^{p}([0,T];W^{k,q}(\R^{d}))$.\\
We shall denote by $Q$ and $P$ respectively  the Leray's projectors $Q$ on the space of gradients vector fields and $P$ on the space of divergence - free vector fields. Namely
\begin{equation}
Q=\nabla \Delta^{-1}\dive\qquad P=I-Q.
\label{1}
\end{equation} 
Let us remark that   $Q$ and $P$  can be expressed in terms of Riesz multipliers, therefore they are  bounded linear operators on every $W^{k,p}$ $(1< p <\infty)$ space (see \cite{Ste93}).\\
We recall the following vector identities that we will  use later on in the paper. If ${\bf A}$ and ${\bf H}$ are two vector fields then it holds
\begin{equation}
\label{v1}
\nabla(|{\bf A}|^{2})=2({\bf A}\cdot\nabla){\bf A}+2{\bf A}\times\curl {\bf A},
\end{equation}
\begin{equation}
\label{v2}
\curl({{\bf A}\times {\bf H}})={\bf A}\dive {\bf H}-{\bf H}\dive{\bf A}+({\bf H}\cdot\nabla){\bf A}-({\bf A}\cdot\nabla){\bf H},
\end{equation}
\begin{equation}
\dive(({\bf A}\times {\bf H})\times {\bf H})=\curl {\bf H}\times {\bf H}\cdot {\bf A}-\curl({\bf A}\times {\bf H})\cdot {\bf H}.
\label{v3}
\end{equation}

\section{Uniform bounds}
In this section we wish to establish the priori estimates independent on $\e$, that comes directly by the energy associated to the system  \eqref{MHDA}. In fact we define  the energy of the system \eqref{MHDA} as 
\begin{equation}
\label{E}
E(t)=\frac{1}{2}\int_{\R^{3}}\left( |\bue(x,t)|^{2}+ |\bBe(x,t)|^{2}+ | \sqrt{\e}\pe(x,t)|^{2}+ | \sqrt{\e}\phi^{\e}(x,t)|^{2}\right)dx.
\end{equation}
If we multiply the equations of the system \eqref{MHDA} respectively by $\bue$, $\bBe$,  $\pe$ and $\phi^{\e}$, we sum up and integrate by parts in space and time, with the use of the identities \eqref{v1}-\eqref{v3} we are able to prove the following theorem
\begin{theorem}
Let us consider the solution $(\bue, \bBe, \pe, \phie)$ of the Cauchy problem for the system \eqref{MHDA}. Assume that the hypotheses (ID) hold, then one has
\begin{equation}
\label{9}
E(t)+\int_{0}^{t}\!\!\int_{\R^{3}}\left(|\nabla \bue(x,s)|^{2}+|\nabla \bBe(x,s)|^{2}\right)dxds=E(0).
\end{equation}
\label{t1}
\end{theorem}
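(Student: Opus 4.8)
The plan is to establish \eqref{9} by the formal energy method: test each equation of \eqref{MHDA} against its natural unknown, integrate over $\R^{3}$, and sum. Concretely, I would multiply $\eqref{MHDA}_{1}$ by $\bue$, $\eqref{MHDA}_{2}$ by $\bBe$, $\eqref{MHDA}_{3}$ by $\pe$ and $\eqref{MHDA}_{4}$ by $\phie$, integrate each over $\R^{3}$, and add the four resulting identities. The time-derivative terms immediately assemble into $\frac{d}{dt}E(t)$ (the factors $\sqrt{\e}$ in \eqref{E} arising precisely because $\eqref{MHDA}_{3}$ and $\eqref{MHDA}_{4}$ carry an $\e$ in front of $\partial_{t}\pe$ and $\partial_{t}\phie$), while the diffusion terms $\mu\int \Delta\bue\cdot\bue$ and $\int\Delta\bBe\cdot\bBe$ produce, after one integration by parts, the dissipation $-\mu\int|\nabla\bue|^{2}-\int|\nabla\bBe|^{2}$; recalling the normalization $\mu=1/R_{e}=1$, this matches the left-hand side of \eqref{9}.

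Next I would dispose of the terms that must cancel. The pressure/potential coupling cancels because $\int\nabla\pe\cdot\bue=-\int\pe\,\dive\bue$ is exactly balanced by the $\int(\dive\bue)\pe$ coming from $\eqref{MHDA}_{3}$, and identically for the pair $(\phie,\bBe)$ through $\eqref{MHDA}_{4}$. The convective contribution is where the artificial–compressibility correction earns its keep: integrating by parts gives $\int(\bue\cdot\nabla)\bue\cdot\bue=-\tfrac12\int(\dive\bue)|\bue|^{2}$, which is annihilated exactly by the extra term $-\tfrac12(\dive\bue)\bue$ tested against $\bue$. This is the algebraic reason the correction term was introduced, and it is precisely what upgrades the energy inequality to the equality \eqref{9}.

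The main computational step is the cancellation of the magneto–velocity coupling, i.e. showing
\[
\int_{\R^{3}}(\curl\bBe\times\bBe)\cdot\bue\,dx+\int_{\R^{3}}\curl(\bue\times\bBe)\cdot\bBe\,dx=0.
\]
I would obtain this from the vector identity \eqref{v3} with ${\bf A}=\bue$, ${\bf H}=\bBe$: integrating $\dive((\bue\times\bBe)\times\bBe)$ over $\R^{3}$ kills the left-hand side (decay at infinity), tying the two coupling integrals together; combined with the self-adjointness of $\curl$, namely $\int\curl{\bf F}\cdot{\bf G}=\int{\bf F}\cdot\curl{\bf G}$, and the cyclic property of the scalar triple product, the two terms are seen to be opposite and hence sum to zero. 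The identities \eqref{v1} and \eqref{v2} do the bookkeeping that recasts the Lorentz force and the induction term in $\curl$ form. Once these three families of terms vanish, summing leaves $\frac{d}{dt}E(t)+\int_{\R^{3}}(|\nabla\bue|^{2}+|\nabla\bBe|^{2})\,dx=0$, and integrating in time over $[0,t]$ yields \eqref{9}, the additive constant being $E(0)$ as fixed by the data (ID).

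The point requiring care — and the genuine obstacle to a rigorous rather than merely formal argument — is the justification of these manipulations for the (possibly only weak) solutions of \eqref{MHDA}: testing an equation against the solution itself and integrating by parts presupposes enough regularity and integrability (for instance $\bue,\bBe\in L^{2}_{t}\dot H^{1}_{x}$ with adequate control of the nonlinear fluxes, together with spatial decay to discard the boundary terms and the integral of the divergence above). I would handle this in the standard way, deriving the identity first for a regularized or Galerkin approximation of \eqref{MHDA}, where every integration by parts is licit, and then passing to the limit; for classical solutions the computation above is exact and gives \eqref{9} directly.
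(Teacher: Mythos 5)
Your proposal is correct and is essentially identical to the paper's argument, which is given only as a one-line remark before the theorem: multiply the equations of \eqref{MHDA} by $\bue$, $\bBe$, $\pe$, $\phie$ respectively, sum, integrate by parts, and use the identities \eqref{v1}--\eqref{v3} to cancel the convective and magneto--velocity coupling terms. Your write-up actually supplies more detail than the paper does (in particular the explicit cancellation $\int(\curl\bBe\times\bBe)\cdot\bue\,dx+\int\curl(\bue\times\bBe)\cdot\bBe\,dx=0$ via the triple-product/self-adjointness argument, and the remark that rigor for weak solutions requires a Galerkin or regularization step), so there is nothing to add.
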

From the energy equality \eqref{9} we get the following list of estimates
 \begin{equation}
  \sqrt{\e}\pe, \sqrt{\e}\phie \quad  \text{are bounded in} \ L^{\infty}([0,T];L^{2}(\R^{3})),\label{11}
 \end{equation}
  \begin{equation}
 \nabla\bue, \nabla\bBe \quad  \text{is bounded in $L^{2}([0,T]\times\R^{3}).$}  \label{13}
 \end{equation}
 By combining \eqref{11} with Sobolev embeddings theorems we obtain that
  \begin{equation}
 \bue, \bBe \quad  \text{are bounded in $L^{\infty}([0,T];L^{2}(\R^{3}))\cap L^{2}([0,T];L^{6}(\R^{3})),$}  \label{14}
 \end{equation}
Using together \eqref{13} and \eqref{14} we have that 
  \begin{equation}
  \begin{split}
&(\bue \!\cdot\!\nabla)\bue,  \bue\dive\bue, \bue\times \bBe, \curl \bBe\times\bBe \\
 &\text{are bounded in $L^{2}([0,T];L^{1}(\R^{3}))\cap L^{1}([0,T];L^{3/2}(\R^{3})),$}  
\end{split}
\label{17}
\end{equation}
Finally \eqref{11} combined with the equations $\eqref{MHDA}_{3}$ and $\eqref{MHDA}_{4}$ gives rise to  
\begin{equation}
 \e\pe_{t}, \e\phie_{t} \quad  \text{relatively compact in $H^{-1}([0,T]\times \R^{3}).$}  \label{12}
 \end{equation}
 
\section{Acoustic  wave equations and estimates}
As already mentioned in  the Introduction one of the major issue in this type of analysis is the presence of acoustic waves which prevents the strong convergence of the gradient part of the velocity and magnetic field. It turns out that these waves satisfy  inhomogenous wave equations which can be seen as   the superposition of waves with different frequencies scales connected with the viscous tensor of the fluid and  to the convective terms. In order to study this phenomena first of all we rescale the time variable, the velocity, the magnetic field, the potential $\phie$ and the pressure in the following way 
\begin{equation}
\label{18}
\left .
\begin{split}
\tau&=\frac{t}{\sqrt{\e}}\\
\ut(x,\tau)=\bue(x,\sqrt{\e}\tau), &\quad \Bt(x,\tau)=\bBe(x,\sqrt{\e}\tau)\\
 \pt(x,\tau)=\pe(x,\sqrt{\e}\tau), &\quad \phit(x,\tau)=\phie(x,\sqrt{\e}\tau).
 \end{split}
 \right \}
\end{equation}
As a consequence of this scaling the system \eqref{MHDA} becomes
\begin{equation}
\begin{cases}
\displaystyle{\partial_{\tau}\ut+\sqrt{\e}\nabla \pt=\sqrt{\e}\Delta \ut-\sqrt{\e}\left(\ut\cdot\nabla\right)\ut -\frac{\sqrt{\e}}{2}(\dive \ut)\ut}+\sqrt{\e}\curl \Bt\times \Bt\\
\displaystyle{\partial_{\tau}\Bt+\sqrt{\e}\nabla \phit=\sqrt{\e}\Delta \Bt-\sqrt{\e}\curl( \ut\times \Bt)}\\
\sqrt{\e}\partial_{\tau}\pt+ \dive \ut=0\\
\sqrt{\e}\partial_{\tau}\phit+ \dive \Bt=0
\end{cases}
\label{19}
\end{equation}
then, by differentiating with respect to time the equation $\eqref{19}_{3}$ and by using $\eqref{19}_{1}$, we get that $\pt$ satisfies the following wave equation
\begin{equation}
\label{20}
\partial_{\tau\tau}\pt-\Delta\pt= -\Delta \dive\ut+\dive\left(\left(\ut\cdot\nabla\right)\ut +\frac{1}{2}(\dive \ut)\ut +\curl \Bt\times \Bt\right),
\end{equation}
where, by using the bounds \eqref{11}-\eqref{17}, the inhomogenous parts belongs to 
$L^{2}([0,T];H^{-2}(\R^{3}))+ L^{1}([0,T];W^{-1,3/2}(\R^{3}))$.

In the same way by differentiating with respect to time the equation $\eqref{19}_{4}$ and by using $\eqref{19}_{2}$, we get that $\Bt$ satisfies the following wave equation
\begin{equation}
\label{20B}
\partial_{\tau\tau}\phit-\Delta\phit =-\Delta \dive\Bt,
\end{equation}
with the right-hand side  in  $L^{2}([0,T];H^{-2}(\R^{3}))$.
In order to estimate $\pt$ and $\phit$ we will make use of the dispersive estimates of Strichartz type that we briefly recall in the next section.
\subsection{Strichartz estimates}
If  $w$ is a (weak) solution of the following wave equation in the space $[0,T]\times \R^{d}$
\begin{equation*}
\begin{cases}
\left(-\frac{\partial ^{2}}{\partial t}+\Delta\right)w(t,x)=F(t,x)\\
w(0,\cdot)=f,\quad \partial_{t}w(0,\cdot)=g,
\end{cases}
\end{equation*}
for some data $f,g, F$ and time $0<T<\infty$, 
then $w$ satisfies the following Strichartz estimates, (see \cite{GV95}, \cite{KT98})
\begin{equation}
\|w\|_{L^{q}_{t}L^{r}_{x}}+\|\partial_{t}w\|_{L^{q}_{t}W^{-1,r}_{x}}\lesssim \|f\|_{\dot H^{\gamma}_{x}}+\|g\|_{\dot H^{\gamma -1}_{x}}+\|F\|_{L^{\tilde{q}'}_{t}L^{\tilde{r}'}_{x}},
\label{s2}
\end{equation}
where $(q,r)$, $(\tilde{q},\tilde{r})$ are \emph{wave admissible} pairs, namely they satisfy 
\begin{equation*}
\frac{2}{q}\leq (d-1)\left(\frac{1}{2}-\frac{1}{r}\right) \qquad 
\frac{2}{\tilde{q}}\leq (d-1)\left(\frac{1}{2}-\frac{1}{\tilde{r}}\right)
\end{equation*}
and moreover the following   conditions holds
\begin{equation*}
\frac{1}{q}+\frac{d}{r}=\frac{d}{2}-\gamma=\frac{1}{\tilde{q}'}+\frac{d}{\tilde{r}'}-2.
\end{equation*}
In particular, later on we shall use \eqref{s2} in the case of $d=3$, $(\tilde{q}', {\tilde{r}'})=(1, 3/2)$,  then $\gamma=1/2$ and $(q,r)=(4,4)$, namely the following estimate
\begin{equation}
\|w\|_{L^{4}_{t,x}}+\|\partial_{t}w\|_{L^{4}_{t}W^{-1,4}_{x}}\lesssim \|f\|_{\dot H^{1/2}_{x}}+\|g\|_{\dot H^{ -1/2}_{x}}+\|F\|_{L^{1}_{t}L^{3/2}_{x}}.
\label{s3}
\end{equation}
Beside the Strichartz estimate \eqref{s2} or \eqref{s3} in the case of $d=3$ (see \cite{So95}), by using Duhamel's principle  and an earlier  Strichartz estimate  \cite{S77}, we can also  deduce the  estimate
\begin{equation}
\|w\|_{L^{4}_{t,x}}+\|\partial_{t}w\|_{L^{4}_{t}W^{-1,4}_{x}}\lesssim \|f\|_{\dot H^{1/2}_{x}}+\|g\|_{\dot H^{-1/2}_{x}}+\|F\|_{L^{1}_{t}L^{2}_{x}}.
\label{s1}
\end{equation}

\subsection{Estimates for $\phit$}
In order to apply the dispersive estimates of the type \eqref{s2} to the wave equation \eqref{20B} in different components we have to specify the initial data, namely we will analyze the following wave equations
\begin{equation}
\label{21B}
\begin{cases}
     \partial_{\tau\tau}\phit-\Delta\phit=-\Delta \dive\Bt=G \\
     \phit(x,0)=\phi^{\e}_{0}\quad \partial_{\tau} \phit(x,0)=\dive\bB^{\e}_{0},
   \end{cases}
\end{equation}    
We are able to prove the following theorem.
\begin{theorem}
Let us consider the solution $(\bue, \bBe,\pe,\phie)$ of the Cauchy problem for the system \eqref{MHDA}. Assume that the hypotheses (ID) hold. Then we set the following estimate 
\begin{align}
\hspace{-1mm}\e^{3/8}\|\phie\|_{L^{4}_{t} W^{-2,4}_{t}}+\e^{7/8}\|\partial_{t}\phie\|_{L^{4}_{t} W^{-3,4}_{t}}&\lesssim \sqrt{\e}\|\phie_{0}\|_{L^{2}_{x}}+\|\dive {\bf B}^{\e}_{0}\|_{H^{-1}_{x}}\notag\\
&+\sqrt{T}\|\dive \bBe\|_{L^{2}_{t}L^{2}_{x}}.
\label{23B}
\end{align}
\label{t2B}
\end{theorem}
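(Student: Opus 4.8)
The plan is to read \eqref{23B} off the rescaled wave equation \eqref{21B} by a single application of the Strichartz estimate \eqref{s1}, followed by a careful conversion of the rescaled norms back to the original time variable. First I work in the fast variables $(\tau,\phit)$ of \eqref{18}. Differentiating $\eqref{19}_{4}$ and inserting $\eqref{19}_{2}$ gives \eqref{20B}, i.e. $\phit$ solves $\partial_{\tau\tau}\phit-\Delta\phit=-\Delta\dive\Bt$, and reading the initial velocity directly off $\eqref{19}_{4}$ yields $\partial_{\tau}\phit(\cdot,0)=-\e^{-1/2}\dive\bB^{\e}_{0}$ (it is this $\e^{-1/2}$ that will later cancel against the rescaling of the left-hand side). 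Rather than applying \eqref{s1} to $\phit$ itself, I apply the operator $(I-\Delta)^{-1}=\langle D\rangle^{-2}$ to the whole equation and estimate $w:=\langle D\rangle^{-2}\phit$. This is the natural choice because the Strichartz estimate produces $\|w\|_{L^{4}_{\tau,x}}$ and $\|\partial_{\tau}w\|_{L^{4}_{\tau}W^{-1,4}_{x}}$, which are exactly $\|\phit\|_{L^{4}_{\tau}W^{-2,4}_{x}}$ and $\|\partial_{\tau}\phit\|_{L^{4}_{\tau}W^{-3,4}_{x}}$, the two quantities appearing in \eqref{23B}; moreover it lowers the order of the source.

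Next I treat the three terms on the right of \eqref{s1}. The source becomes $\langle D\rangle^{-2}G=-\langle D\rangle^{-2}\Delta(\dive\Bt)$; since $\langle D\rangle^{-2}\Delta$ is a zeroth-order Fourier multiplier acting on the scalar $\dive\Bt$, and $\dive\Bt\in L^{2}_{\tau}L^{2}_{x}$ by the energy bound \eqref{13}, one gets $\|\langle D\rangle^{-2}G\|_{L^{2}_{x}}\lesssim\|\dive\Bt\|_{L^{2}_{x}}$. Because \eqref{s1} requires the source only in $L^{1}_{\tau}L^{2}_{x}$, a Cauchy--Schwarz in $\tau$ over the rescaled interval $[0,T/\sqrt\e]$ gives $\|\langle D\rangle^{-2}G\|_{L^{1}_{\tau}L^{2}_{x}}\le(T/\sqrt\e)^{1/2}\|\dive\Bt\|_{L^{2}_{\tau}L^{2}_{x}}$; this is the origin of the factor $\sqrt T$. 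For the data, $\|\langle D\rangle^{-2}\phie_{0}\|_{\dot H^{1/2}}\lesssim\|\phie_{0}\|_{L^{2}}$ because $|D|^{1/2}\langle D\rangle^{-2}$ is bounded on $L^{2}$, while $\|\langle D\rangle^{-2}\partial_{\tau}\phit(\cdot,0)\|_{\dot H^{-1/2}}=\e^{-1/2}\||D|^{-1/2}\langle D\rangle^{-2}\dive\bB^{\e}_{0}\|_{L^{2}}$, which I bound by $\e^{-1/2}\|\dive\bB^{\e}_{0}\|_{H^{-1}}$ using boundedness of the corresponding multiplier --- here the extra factor $|\xi|$ carried by $\dive$ is what removes the low-frequency singularity of $|D|^{-1/2}$.

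It then remains to convert every rescaled norm back to $t$. From $\tau=t/\sqrt\e$ a function and its rescaling obey $\|\,\cdot\,\|_{L^{4}_{t}}=\e^{1/8}\|\,\cdot\,\|_{L^{4}_{\tau}}$, and $\partial_{t}=\e^{-1/2}\partial_{\tau}$; hence $\e^{3/8}\|\phie\|_{L^{4}_{t}W^{-2,4}_{x}}=\sqrt\e\,\|\phit\|_{L^{4}_{\tau}W^{-2,4}_{x}}$ and $\e^{7/8}\|\partial_{t}\phie\|_{L^{4}_{t}W^{-3,4}_{x}}=\sqrt\e\,\|\partial_{\tau}\phit\|_{L^{4}_{\tau}W^{-3,4}_{x}}$, so the left-hand side of \eqref{23B} equals $\sqrt\e$ times the Strichartz output. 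Likewise $\|\dive\Bt\|_{L^{2}_{\tau}L^{2}_{x}}=\e^{-1/4}\|\dive\bBe\|_{L^{2}_{t}L^{2}_{x}}$. Multiplying \eqref{s1} through by $\sqrt\e$ and substituting, the powers of $\e$ cancel term by term: the $\phie_{0}$ term keeps its $\sqrt\e$, the velocity datum gives $\sqrt\e\cdot\e^{-1/2}=1$ in front of $\|\dive\bB^{\e}_{0}\|_{H^{-1}}$, and the source gives $\sqrt\e\cdot(T/\sqrt\e)^{1/2}\cdot\e^{-1/4}=\sqrt T$ in front of $\|\dive\bBe\|_{L^{2}_{t}L^{2}_{x}}$, which is exactly \eqref{23B}.

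The step I expect to be the crux is the combined handling of the rough source and the $\e$-bookkeeping: the forcing $-\Delta\dive\Bt$ lives only in $L^{2}_{\tau}H^{-2}_{x}$, and the whole scheme works only because absorbing two derivatives into $\langle D\rangle^{-2}$ turns it into an $L^{2}_{x}$-valued forcing proportional to $\dive\Bt$, after which the single H\"older factor $(T/\sqrt\e)^{1/2}$ must interlock precisely with the $\e^{-1/2}$ coming from $\eqref{19}_{4}$ and the $\e^{-1/4}$ from each time rescaling to leave the clean prefactors $\e^{3/8},\e^{7/8}$ and $\sqrt T$. The most delicate verification is the low-frequency behaviour of the initial-velocity term, where one must exploit the divergence structure of $\dive\bB^{\e}_{0}$ rather than crude multiplier bounds.
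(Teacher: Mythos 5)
Your proposal follows essentially the same route as the paper's proof of Theorem \ref{t2B}: pass to the fast variables \eqref{18}, write the wave equation \eqref{21B}, apply the Strichartz estimate \eqref{s1} to a second-order negative smoothing of $\phit$ (the paper uses $\Delta^{-1}\phit$, you use $(I-\Delta)^{-1}\phit$), bound the source by Cauchy--Schwarz in $\tau$ over $(0,T/\sqrt\e)$, and rescale back. Your $\e$-bookkeeping is correct, and two of your choices are in fact tighter than the paper's: with $(I-\Delta)^{-1}$ the relevant multipliers $|\xi|^{2}\langle\xi\rangle^{-2}$ and $|\xi|^{1/2}\langle\xi\rangle^{-2}$ (where $\langle\xi\rangle=(1+|\xi|^{2})^{1/2}$) are genuinely bounded, so the source term and the $\phie_{0}$ term cause no low-frequency trouble, whereas the paper silently replaces the homogeneous norms produced by \eqref{s1} in \eqref{25B} by the nonhomogeneous ones in \eqref{26B}; and you correctly restore the factor in $\partial_{\tau}\phit(\cdot,0)=-\e^{-1/2}\dive\bB^{\e}_{0}$, which \eqref{21B} omits but which is exactly what makes the coefficient of $\|\dive\bB^{\e}_{0}\|_{H^{-1}_{x}}$ in \eqref{23B} equal to one after multiplication by $\sqrt\e$.

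The one step that does not hold as you state it is the low-frequency bound for that initial-velocity term. You claim $\||D|^{-1/2}(I-\Delta)^{-1}\dive\bB^{\e}_{0}\|_{L^{2}}\lesssim\|\dive\bB^{\e}_{0}\|_{H^{-1}}$, arguing that the factor $|\xi|$ carried by $\dive$ removes the singularity of $|D|^{-1/2}$. It does, but what it buys is a bound by $\|\bB^{\e}_{0}\|_{L^{2}}$, not by $\|\dive\bB^{\e}_{0}\|_{H^{-1}}$: on the Fourier side, $|\xi|^{-1/2}\langle\xi\rangle^{-2}\,|\xi\cdot\widehat{\bB^{\e}_{0}}|\le|\xi|^{1/2}\langle\xi\rangle^{-2}\,|\widehat{\bB^{\e}_{0}}|\le|\widehat{\bB^{\e}_{0}}|$, whereas a comparison with $\langle\xi\rangle^{-1}|\widehat{\dive\bB^{\e}_{0}}|$ would require $|\xi|^{-1/2}\langle\xi\rangle^{-1}$ to be bounded, which fails as $\xi\to 0$. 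Concretely, if $\widehat{\bB^{\e}_{0}}$ is supported on $\{|\xi|\sim\delta\}$, of size $\delta^{-3/2}$ and parallel to $\xi$, then your left-hand side is of order $\delta^{1/2}$ while $\|\dive\bB^{\e}_{0}\|_{H^{-1}}$ is of order $\delta$, so the claimed multiplier inequality fails as $\delta\to0$. What your argument actually proves is \eqref{23B} with $\|\dive\bB^{\e}_{0}\|_{H^{-1}_{x}}$ replaced by $\|\bB^{\e}_{0}\|_{L^{2}_{x}}$ (or by the sum of the two); since (ID) keeps $\|\bB^{\e}_{0}\|_{L^{2}}$ uniformly bounded, this is just as serviceable for every later use of the estimate (Propositions \ref{c2} and \ref{p2B} only need the right-hand side to be $O(1)$ in $\e$), and you should state it that way rather than assert the false multiplier bound. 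In fairness, the paper's own proof makes the very same leap at this term --- the homogeneous $\dot H^{-5/2}$-type norm that \eqref{s1} actually produces is not controlled by the nonhomogeneous $\|\dive\bB^{\e}_{0}\|_{H^{-1}_{x}}$ appearing in \eqref{23B} --- so this is a defect inherited from, not introduced by, your argument.
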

\begin{proof}
Since $\phit$ and is a  solution of the wave equation \eqref{21B}  and $G$ is bounded in $L^{1}_{\tau}H^{-2}_{x}$ we can apply the Strichartz estimate \eqref{s1}, with $(x,\tau)\in \R^{3}\times\left (0,T/\sqrt \e\right)$ and we get
\begin{align}
\label{25B}
\|\Delta^{-1}\phit\|_{L^{4}_{\tau,x}}+\|\partial_{\tau}\Delta^{-1}\phit\|_{L^{4}_{\tau} W^{-1,4}}&\lesssim \|\Delta^{-1}\phit(x,0)\|_{ H^{1/2}_{x}}+
\|\Delta^{-1/2}\partial_{\tau}\phit(x,0)\|_{ H^{-1/2}_{x}}\notag\\&+\|\Delta^{-1}G\|_{L^{1}_{\tau}L^{3/2}_{x},}
\end{align}
namely
\begin{align}
\label{26B}
\|\phit\|_{L^{4}_{\tau} W^{-2,4}_{x}}+\|\partial_{\tau}\phit\|_{L^{4}_{\tau} W^{-3,4}_{x}}&\lesssim \|\phit(x,0)\|_{ H^{-3/2}_{x}}+
\|\partial_{\tau}\phit(x,0)\|_{ H^{-5/2}_{x}}\notag
\\&+\frac{\sqrt{T}}{\e^{1/4}}\|\dive \Bt\|_{L^{2}_{\tau}L^{2}_{x}}
\end{align}
Finally, since
\begin{equation*}
\|\phit\|_{L^{r}((0,T/\sqrt \e );L^{q}(\R^{3}))}=\e^{-1/2r}\|\phie\|_{L^{r}([0,T];L^{q}(\R^{3}))}
\end{equation*}
 we end up with \eqref{23B}.
\end{proof}

\subsection{Estimates for $\pt$}

In order to get the estimates for $\pt$ we split it as $\pt=\pt_{1}+\pt_{2}$ where $\pt_{1}$ and $\pt_{2}$  solve the following wave equations:
\begin{equation}
\label{21}
\begin{cases}
     \partial_{\tau\tau}\pt_{1}-\Delta\pt_{1} =-\Delta \dive\ut=F_{1} \\
     \pt_{1}(x,0)=\partial_{\tau} \pt_{1}(x,0)=0,
   \end{cases}
\end{equation}    
\begin{equation}
\label{22} 
\begin{cases}   
\displaystyle{ \partial_{\tau\tau}\pt_{2}-\Delta\pt_{2} =\dive \left(\left(\ut\cdot\nabla\right)\ut +\frac{1}{2}(\dive \ut)\ut + \curl \Bt\times \Bt\right)=F_{2}}\\
\pt_{2}(x,0)=\pt(x,0)\quad  \partial_{\tau}\pt_{2}(x,0)=\partial_{\tau}\pt(x,0).
 \end{cases}
 \end{equation}
Now by following the same line of arguments as in the previous section or as in \cite{DM06}
we can prove the following theorem.
\begin{theorem}
Let us consider the solution  $(\bue, \bBe,\pe,\phie)$ of the Cauchy problem for the system \eqref{MHDA}. Assume that the hypotheses (ID) hold. Then we set the following estimate 
\begin{align}
\hspace{-1mm}\e^{3/8}\|\pe\|_{L^{4}_{t} W^{-2,4}_{t}}+\e^{7/8}\|\partial_{t}\pe\|_{L^{4}_{t} W^{-3,4}_{t}}&\lesssim \sqrt{\e}\|\pe_{0}\|_{L^{2}_{x}}+\|\dive{\bf u}^{\e}_{0}\|_{H^{-1}_{x}}+\sqrt{T}\|\dive \bue\|_{L^{2}_{t}L^{2}_{x}}\notag\\&+
\|\left(\bue\cdot\nabla\right)\bue +\frac{1}{2}(\dive \bue)\bue+\curl \bBe\times \bBe\|_{L^{1}_{t}L^{3/2}_{x}}.
\label{23}
\end{align}
\label{t2}
\end{theorem}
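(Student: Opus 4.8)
The plan is to mirror exactly the argument already carried out for $\phit$ in Theorem \ref{t2B}, but now applied to the splitting $\pt=\pt_1+\pt_2$ of the pressure wave. The point of the splitting is that the two pieces of the forcing in \eqref{20} live in different spaces: the viscous/divergence term $F_1=-\Delta\dive\ut$ sits in $L^2_\tau H^{-2}_x$ (by the gradient bound \eqref{13}), while the convective term $F_2=\dive\big((\ut\cdot\nabla)\ut+\tfrac12(\dive\ut)\ut+\curl\Bt\times\Bt\big)$ sits in $L^1_\tau W^{-1,3/2}_x$ by the product bound \eqref{17}. Because each summand has a different natural integrability in time, I would estimate them with \emph{different} Strichartz inequalities: for $\pt_1$, with its vanishing initial data, I apply the $L^1_tL^2_x$-endpoint estimate \eqref{s1} (as was done for $\phit$), whereas for $\pt_2$ I apply \eqref{s3}, whose inhomogeneous term is measured in $L^1_tL^{3/2}_x$, matching precisely the regularity of $F_2$.

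First I would treat $\pt_1$. Since $\pt_1(x,0)=\partial_\tau\pt_1(x,0)=0$ by \eqref{21}, the data terms drop out and \eqref{s1} gives, after applying $\Delta^{-1}$ to reduce the order of $F_1=-\Delta\dive\ut$ to something controllable in $L^1_\tau L^2_x$,
\begin{equation*}
\|\pt_1\|_{L^4_\tau W^{-2,4}_x}+\|\partial_\tau\pt_1\|_{L^4_\tau W^{-3,4}_x}\lesssim \frac{\sqrt T}{\e^{1/4}}\|\dive\ut\|_{L^2_\tau L^2_x},
\end{equation*}
exactly as in the passage from \eqref{25B} to \eqref{26B}, the factor $\sqrt T/\e^{1/4}$ arising from the Hölder step in time on the interval $(0,T/\sqrt\e)$. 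Next I would treat $\pt_2$, which carries the genuine initial data $\pt_2(x,0)=\pt(x,0)$ and $\partial_\tau\pt_2(x,0)=\partial_\tau\pt(x,0)$. Here \eqref{s3} yields the data contributions $\|\pt(x,0)\|_{H^{-3/2}_x}+\|\partial_\tau\pt(x,0)\|_{H^{-5/2}_x}$ together with $\|\Delta^{-1}F_2\|_{L^1_\tau L^{3/2}_x}$; since $\partial_\tau\pt(x,0)=-\tfrac{1}{\sqrt\e}\dive\ut$ evaluated at the initial time by $\eqref{19}_3$, this reproduces the $\|\dive\bu^\e_0\|_{H^{-1}_x}$ term after rescaling.

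Finally I would add the two estimates and undo the time rescaling using the identity $\|\phit\|_{L^r((0,T/\sqrt\e);L^q)}=\e^{-1/2r}\|\phie\|_{L^r([0,T];L^q)}$ (with $r=4$, so the prefactor $\e^{-1/8}$ combines with the $W^{-2,4}$ and $W^{-3,4}$ scalings to produce the $\e^{3/8}$ and $\e^{7/8}$ weights in \eqref{23}), converting every $\ut,\Bt,\pt$ back into $\bue,\bBe,\pe$ and every $\tau$-norm on $(0,T/\sqrt\e)$ back into a $t$-norm on $[0,T]$; the convective forcing returns as $\|(\bue\cdot\nabla)\bue+\tfrac12(\dive\bue)\bue+\curl\bBe\times\bBe\|_{L^1_tL^{3/2}_x}$, which is finite by \eqref{17}. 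The one place demanding care — the main obstacle — is the bookkeeping of the powers of $\e$: each application of $\Delta^{-1}$ to lower the order of the forcing, each Hölder step in $\tau$, and the final rescaling of the $L^4_\tau$ and negative-order Sobolev norms all contribute fractional powers of $\e$, and these must be tracked consistently so that the homogeneous data and the two distinct forcing terms all land with the claimed weights; the actual Strichartz and Sobolev bounds themselves are routine given \eqref{s1}, \eqref{s3}, and the uniform bounds \eqref{11}–\eqref{17}.
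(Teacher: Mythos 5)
Your proposal is correct and takes essentially the same route as the paper: the paper's own ``proof'' of Theorem \ref{t2} is just the instruction to repeat the argument of Theorem \ref{t2B} (and of \cite{DM06}) on the splitting \eqref{21}--\eqref{22}, which is exactly what you do by applying \eqref{s1} to $\pt_{1}$ and \eqref{s3} to $\pt_{2}$ and then undoing the time rescaling. Your $\e$-bookkeeping (H\"older in $\tau$ on $(0,T/\sqrt{\e})$, the $\Delta^{-1}$ reduction of the forcing, the initial-data term $\partial_{\tau}\pt(x,0)=-\tfrac{1}{\sqrt{\e}}\dive\bu^{\e}_{0}$, and the $L^{4}_{\tau}$ rescaling producing the weights $\e^{3/8}$ and $\e^{7/8}$) matches the passage from \eqref{25B} to \eqref{26B} to \eqref{23B}.
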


\section{Strong convergence}
This section is devoted to the strong convergence of the magnetic field $\bBe$ and the velocity field $\bue$. In particular we will show that the gradient part  of the magnetic field $Q\bBe$ and of the velocity $Q\bue$ converges strongly to $0$, while the incompressible component of the magnetic field $P\bBe$ of the velocity field $P\bue$ converges strongly to $P\bB=\bB$  and $P\bu=\bu$ respectively , where $\bB$ is the limit profile as $\e\downarrow 0$ of $\bBe$ and $\bu$ is the limit profile as $\e\downarrow 0$ of $\bue$.
We start this section with a short remark on some strong convergence which are an easy consequence of the estimates \eqref{11}, \eqref{23B} and \eqref{23}, whose proof is left to the reader.
\begin{proposition}
\label{c2}
Let us consider the solution $(\bue, \bBe,\pe,\phie)$ of the Cauchy problem for the system \eqref{MHDA}. Assume that the hypotheses (ID) hold. Then, as $\e\downarrow 0$, one has
\begin{align*}
&\e\pe\longrightarrow 0 &\quad& \text{strongly in $L^{\infty}([0,T];L^{2}(\R^{3}))\cap L^{4}([0,T];W^{-2,4}(\R^{3}))$,}
\\
&\e\phie\longrightarrow 0 &\quad& \text{strongly in $L^{\infty}([0,T];L^{2}(\R^{3}))\cap L^{4}([0,T];W^{-2,4}(\R^{3}))$,}
\\
&\dive \bue \longrightarrow 0 &\quad& \text{strongly in $ W^{-1,\infty}([0,T];L^{2}(\R^{3}))\cap L^{4}([0,T];W^{-3,4}(\R^{3}))$}.
\\
&\dive \bBe \longrightarrow 0 &\quad& \text{strongly in $ W^{-1,\infty}([0,T];L^{2}(\R^{3}))\cap L^{4}([0,T];W^{-3,4}(\R^{3}))$}.
\end{align*}
\end{proposition}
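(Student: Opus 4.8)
The plan is to read off all four convergences purely from the $\e$-scaling in the Strichartz bounds \eqref{23} and \eqref{23B} together with the energy bound \eqref{11}; no compactness argument is needed, so the whole thing is quantitative. The first step I would carry out is to check that the right-hand sides of \eqref{23} and \eqref{23B} are bounded \emph{uniformly} in $\e$. For \eqref{23}: the term $\sqrt{\e}\|\pe_{0}\|_{L^{2}_{x}}$ tends to $0$ by (ID); the term $\|\dive \bu^{\e}_{0}\|_{H^{-1}_{x}}$ is bounded because $\bu^{\e}_{0}\to\bu_{0}$ in $L^{2}(\R^{3})$ (hence is a bounded sequence there) and $\dive$ maps $L^{2}$ continuously into $H^{-1}$; the term $\sqrt{T}\|\dive\bue\|_{L^{2}_{t}L^{2}_{x}}$ is controlled by \eqref{13}; and the nonlinear term in $L^{1}_{t}L^{3/2}_{x}$ is controlled by \eqref{17}. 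The identical reasoning applies to \eqref{23B}. Thus there is a constant $C$, independent of $\e$, with
\[
\e^{3/8}\|\pe\|_{L^{4}_{t}W^{-2,4}_{x}}+\e^{7/8}\|\partial_{t}\pe\|_{L^{4}_{t}W^{-3,4}_{x}}\le C,
\]
and the analogous bound for $\phie$ from \eqref{23B}.

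Next I would deduce the convergence of $\e\pe$ and $\e\phie$. In $L^{\infty}_{t}L^{2}_{x}$ I simply factor a half-power of $\e$ out of \eqref{11}, writing $\|\e\pe\|_{L^{\infty}_{t}L^{2}_{x}}=\sqrt{\e}\,\|\sqrt{\e}\pe\|_{L^{\infty}_{t}L^{2}_{x}}\le C\sqrt{\e}\to 0$, and identically for $\phie$. In $L^{4}_{t}W^{-2,4}_{x}$ I rearrange the uniform bound above into $\|\pe\|_{L^{4}_{t}W^{-2,4}_{x}}\le C\e^{-3/8}$, whence $\|\e\pe\|_{L^{4}_{t}W^{-2,4}_{x}}\le C\e^{5/8}\to 0$, and likewise $\e\phie\to 0$ using \eqref{23B}.

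For the divergences I would use the continuity-type equations $\eqref{MHDA}_{3}$ and $\eqref{MHDA}_{4}$ directly, which give $\dive\bue=-\partial_{t}(\e\pe)$ and $\dive\bBe=-\partial_{t}(\e\phie)$. Since $\e\pe\to 0$ in $L^{\infty}_{t}L^{2}_{x}$ by the previous step, applying one distributional time derivative yields $\dive\bue\to 0$ in $W^{-1,\infty}_{t}L^{2}_{x}$, and symmetrically for $\dive\bBe$; this is exactly the content of a negative-order-in-time norm (the time-antiderivative of $\dive\bue$ is $-\e\pe$, which goes to $0$ in $L^{\infty}_{t}L^{2}_{x}$). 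For the $L^{4}_{t}W^{-3,4}_{x}$ statement I instead use the second piece of the uniform bound, $\|\partial_{t}\pe\|_{L^{4}_{t}W^{-3,4}_{x}}\le C\e^{-7/8}$, so that $\|\dive\bue\|_{L^{4}_{t}W^{-3,4}_{x}}=\e\|\partial_{t}\pe\|_{L^{4}_{t}W^{-3,4}_{x}}\le C\e^{1/8}\to 0$, with $\dive\bBe$ handled identically via \eqref{23B}.

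The whole proof is thus a bookkeeping exercise in powers of $\e$, which is why it is relegated to the reader. The only point requiring a moment's care—the genuine obstacle, such as it is—is the uniform-in-$\e$ boundedness of the right-hand sides of \eqref{23} and \eqref{23B}; once that is secured, every claim reduces to multiplying by a strictly positive power of $\e$. I would also emphasize that this same boundedness is what reconciles the two different senses of convergence, namely $L^{4}_{t}W^{-2,4}_{x}$ for $\e\pe$ versus $L^{4}_{t}W^{-3,4}_{x}$ for $\dive\bue$, since these originate from the two terms on the left of \eqref{23} carrying the prefactors $\e^{3/8}$ and $\e^{7/8}$ respectively.
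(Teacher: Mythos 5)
Your proposal is correct and is precisely the argument the paper intends: the paper dismisses this proposition as ``an easy consequence of the estimates \eqref{11}, \eqref{23B} and \eqref{23}'', and your proof is exactly the fleshing-out of that remark --- uniform boundedness of the right-hand sides of \eqref{23}, \eqref{23B} via (ID), \eqref{13} and \eqref{17}, then trading positive powers of $\e$ for the stated convergences, with the divergences recovered from $\eqref{MHDA}_{3}$, $\eqref{MHDA}_{4}$ as $\dive\bue=-\partial_{t}(\e\pe)$ and $\dive\bBe=-\partial_{t}(\e\phie)$. No gaps; the bookkeeping of the exponents $\sqrt{\e}$, $\e^{5/8}$, $\e^{1/8}$ is accurate.
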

Before going into the analysis of these strong convergences we list here some technical results which will use later on.
\begin{lemma}
Let us consider  a smoothing kernel $\psi\in C^{\infty}_{0}(\R^{d})$, such that $\psi\geq 0$, $\int_{\R^{d}}\psi dx=1$, and define
\begin{equation*}
\psi_{\alpha}(x)=\alpha^{-d}\psi\left(\frac{x}{\alpha}\right).
\end{equation*}
Then  for any $f\in \dot H^{1}(\R^{d})$, one has
\begin{equation}
\label{y1}
\|f-f\ast \psi_{\alpha}\|_{L^{p}(\R^{d})}\leq C_{p}\alpha^{1-\sigma}\|\nabla f\|_{L^{2}(\R^{d})},
\end{equation}
where
\begin{equation*}
p\in [2, \infty)
\quad \text{if $d=2$}, \quad p\in [2, 6] \quad \text{if $d=3$ \ and}\quad \sigma=d\left(\frac{1}{2}-\frac{1}{p}\right).
\end{equation*}
Moreover the following Young type inequality hold
\begin{equation}
\label{y2}
\|f\ast\psi_{\alpha}\|_{L^{p}(\R^{d})}\leq C\alpha^{-s-d\left(\frac{1}{q}-\frac{1}{p}\right)}\|f\|_{W^{-s,q}(\R^{d})},
\end{equation}
for any $p,q\in [1, \infty]$, $q\leq p$,  $s\geq 0$, $\alpha\in(0,1)$.
\label{ly}
\end{lemma}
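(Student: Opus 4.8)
The plan is to prove the two estimates separately, reducing \eqref{y1} to an $L^2$ base case plus an interpolation, and \eqref{y2} to a scaling identity plus one uniform kernel bound.

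For \eqref{y1} I would first establish the endpoint $p=2$ (i.e. $\sigma=0$). Writing $f-f\ast\psi_{\alpha}=\int_{\R^{d}}\psi_{\alpha}(y)\bigl(f(\cdot)-f(\cdot-y)\bigr)\,dy$, Minkowski's integral inequality together with the elementary translation bound $\|f-f(\cdot-y)\|_{L^{2}}\le |y|\,\|\nabla f\|_{L^{2}}$ (cleanest on the Fourier side, since $\widehat{\psi}(0)=1$ and $\nabla\widehat{\psi}$ is bounded give $|1-\widehat{\psi}(\alpha\xi)|\le C\alpha|\xi|$) and $\int\psi_{\alpha}(y)|y|\,dy=C\alpha$ yield $\|f-f\ast\psi_{\alpha}\|_{L^{2}}\le C\alpha\|\nabla f\|_{L^{2}}$; note this also shows $f-f\ast\psi_{\alpha}\in L^{2}$ even though $f$ need not be. Next, since $\nabla(f-f\ast\psi_{\alpha})=\nabla f-(\nabla f)\ast\psi_{\alpha}$ and $\|\psi_{\alpha}\|_{L^{1}}=1$, Young's inequality gives $\|\nabla(f-f\ast\psi_{\alpha})\|_{L^{2}}\le 2\|\nabla f\|_{L^{2}}$. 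Finally I would interpolate through the Gagliardo--Nirenberg inequality $\|g\|_{L^{p}}\le C\|g\|_{L^{2}}^{1-\sigma}\|\nabla g\|_{L^{2}}^{\sigma}$ with $\sigma=d(1/2-1/p)$, whose admissible range is exactly $\sigma\in[0,1)$ for $d=2$ and $\sigma\in[0,1]$ for $d=3$, i.e. the stated ranges of $p$. Applied to $g=f-f\ast\psi_{\alpha}$ it gives $\|f-f\ast\psi_{\alpha}\|_{L^{p}}\le C(\alpha\|\nabla f\|_{L^{2}})^{1-\sigma}\|\nabla f\|_{L^{2}}^{\sigma}=C\alpha^{1-\sigma}\|\nabla f\|_{L^{2}}$, which is \eqref{y1}.

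For \eqref{y2} the idea is to transfer the negative-order smoothness from $f$ onto the mollifier and then compute by scaling. I would set $g:=(I-\Delta)^{-s/2}f$, so that $g\in L^{q}$ with $\|g\|_{L^{q}}=\|f\|_{W^{-s,q}}$, and use that Fourier multipliers commute with convolution to write $f\ast\psi_{\alpha}=g\ast\Phi_{\alpha}$ with $\Phi_{\alpha}:=(I-\Delta)^{s/2}\psi_{\alpha}$. Young's inequality then gives $\|f\ast\psi_{\alpha}\|_{L^{p}}\le\|g\|_{L^{q}}\|\Phi_{\alpha}\|_{L^{r}}$, where $1/r=1-(1/q-1/p)$ and the hypothesis $q\le p$ guarantees $r\ge 1$. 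So it remains to prove $\|\Phi_{\alpha}\|_{L^{r}}\le C\alpha^{-s-d/r'}$, which is the claimed power because $1/r'=1/q-1/p$. Writing the symbol as $(1+|\xi|^{2})^{s/2}=\alpha^{-s}(\alpha^{2}+|\alpha\xi|^{2})^{s/2}$ and changing variables $\eta=\alpha\xi$ on the Fourier side produces the exact scaling identity $\Phi_{\alpha}(x)=\alpha^{-d-s}h_{\alpha}(x/\alpha)$, where $h_{\alpha}:=\mathcal{F}^{-1}\bigl[(\alpha^{2}+|\cdot|^{2})^{s/2}\widehat{\psi}\bigr]$, hence $\|\Phi_{\alpha}\|_{L^{r}}=\alpha^{-s-d/r'}\|h_{\alpha}\|_{L^{r}}$.

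Everything therefore reduces to the uniform bound $\sup_{\alpha\in(0,1)}\|h_{\alpha}\|_{L^{r}}<\infty$ for every $r\in[1,\infty]$, and this is the step I expect to be the main obstacle rather than bookkeeping. The difficulty is that $(I-\Delta)^{s/2}$ is inhomogeneous and, for non-integer $s$, its symbol fails to be smooth at the origin in the limit $\alpha\downarrow 0$, so one cannot merely read off uniform Schwartz seminorms of $h_{\alpha}$. I would instead control $h_{\alpha}$ by comparison with the homogeneous kernel $(-\Delta)^{s/2}\psi$: using $\widehat{\psi}\in\mathcal{S}$ and $0<\alpha<1$ one has the uniform bound $\|h_{\alpha}\|_{L^{\infty}}\le\int(1+|\eta|^{2})^{s/2}|\widehat{\psi}(\eta)|\,d\eta<\infty$, while the slow decay $|h_{\alpha}(z)|\lesssim|z|^{-d-s}$ (uniform in $\alpha$, and faster for $s=0$ or $s\in2\N$) comes from the behaviour of the symbol near $\eta=0$ exactly as for the fractional Laplacian of a Schwartz function. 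Since $r(d+s)>d$ for all $r\ge 1$, this places $h_{\alpha}$ in every $L^{r}$ with a bound independent of $\alpha$; combined with the scaling identity this gives $\|\Phi_{\alpha}\|_{L^{r}}\le C\alpha^{-s-d(1/q-1/p)}$ and hence \eqref{y2}.
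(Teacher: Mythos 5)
You should know at the outset that the paper offers no proof of Lemma \ref{ly} to compare against: it is stated among the technical preliminaries without argument, being a tool imported from \cite{DM06}, \cite{DM12}. So the review below addresses your argument on its own terms. Your proof of \eqref{y1} is complete and correct: the $L^{2}$ endpoint $\|f-f\ast\psi_{\alpha}\|_{L^{2}}\le C\alpha\|\nabla f\|_{L^{2}}$ (Minkowski plus the translation bound, or the Fourier-side version), the uniform gradient bound $\|\nabla(f-f\ast\psi_{\alpha})\|_{L^{2}}\le 2\|\nabla f\|_{L^{2}}$ from Young's inequality, and Gagliardo--Nirenberg interpolation with $\sigma=d\left(\frac{1}{2}-\frac{1}{p}\right)$ fit together exactly, the admissible range of $\sigma$ reproduces the stated ranges of $p$ (including the Sobolev endpoint $p=6$, $\sigma=1$ in $d=3$), and you correctly handle the one delicate point, namely that $f-f\ast\psi_{\alpha}\in L^{2}$ even though $f\in\dot H^{1}$ itself need not be.

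For \eqref{y2} your reduction is the right one and the bookkeeping checks out: transferring $(I-\Delta)^{s/2}$ onto the mollifier, Young's inequality with $\frac{1}{r'}=\frac{1}{q}-\frac{1}{p}$ (so $r\ge 1$ precisely because $q\le p$), and the exact scaling identity $\Phi_{\alpha}(x)=\alpha^{-d-s}h_{\alpha}(x/\alpha)$. The one step you assert rather than prove is, as you yourself flag, the crux: the uniform decay $|h_{\alpha}(z)|\lesssim|z|^{-d-s}$. The claim is true, but ``exactly as for the fractional Laplacian'' understates the uniformity issue: the error symbol $(\alpha^{2}+|\eta|^{2})^{s/2}-|\eta|^{s}$ is not uniformly negligible, and a naive dyadic decomposition carried down through the frequencies $|\eta|\lesssim\alpha$ produces only the weaker bound $|h_{\alpha}(z)|\lesssim|z|^{-d-s}+\alpha^{s}|z|^{-d}$, which fails to give what you claim when $|z|\gg 1/\alpha$. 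The clean argument uses the uniform symbol estimates $|\partial^{\beta}_{\eta}(\alpha^{2}+|\eta|^{2})^{s/2}|\le C_{\beta}(\alpha^{2}+|\eta|^{2})^{(s-|\beta|)/2}$ and splits frequency space into the single ball $\{|\eta|\le\alpha\}$, where the symbol is smooth at scale $\alpha$ so that $M$ integrations by parts gain $(\alpha|z|)^{-M}$ against the volume factor $\alpha^{d+s}$, dyadic shells for $\alpha\le|\eta|\le 1$, and rapidly decaying tails for $|\eta|\ge 1$ controlled by $\widehat{\psi}$; summing the three contributions gives $|h_{\alpha}(z)|\le C|z|^{-d-s}$ with $C$ independent of $\alpha$, whence $\sup_{0<\alpha<1}\|h_{\alpha}\|_{L^{r}}<\infty$ for $s>0$ (the case $s=0$ being trivial, as you note, since then $h_{\alpha}=\psi$). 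With that kernel estimate actually proved, your argument for \eqref{y2} is correct; as written it is a correct outline whose central analytic step is still owed.
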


\begin{proposition}
Let be $\mathcal{F}\subset L^{p}([0,T];B)$,  $1\leq p<\infty$, $B$ a Banach space. $\mathcal{F}$ is relatively compact in  $L^{p}([0,T];B)$ for $1\leq p<\infty$, or in $C([0,T];B)$ for $p=\infty$ if and only if 
\begin{itemize}
\item[{\bf (i)}]
$\displaystyle{\left\{\int_{t_{1}}^{t_{2}}f(t)dt,\ f\in B\right\}}$ is relatively compact in $B$, $0<t_{1}<t_{2}<T$,
\item[{\bf (ii)}]
$\displaystyle{\lim_{h\to 0}\|f(x+h) - f(x)\|_{L^{p}([0, T-h];B)}=0}$ uniformly for any $f \in \mathcal{F}$.
\end{itemize}
\label{S}
\end{proposition}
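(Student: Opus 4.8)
The plan is to prove this as the vector-valued version of the Kolmogorov--Riesz--Fr\'echet compactness criterion, where the usual ``smallness of the tails at infinity'' is replaced by the relative compactness in $B$ of the time averages, condition (i). I would establish the two implications separately, the necessity being routine and the sufficiency being the substantial part.

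For the necessity, assume $\mathcal{F}$ is relatively compact in $L^{p}([0,T];B)$. To get (i) I would note that for fixed $0<t_{1}<t_{2}<T$ the evaluation $\Lambda f=\int_{t_{1}}^{t_{2}}f(t)\,dt$ is a bounded linear map $L^{p}([0,T];B)\to B$, since H\"older gives $\|\Lambda f\|_{B}\le(t_{2}-t_{1})^{1/p'}\|f\|_{L^{p}}$ with $p'$ the conjugate exponent; a bounded operator carries relatively compact sets to relatively compact sets, so $\Lambda(\mathcal{F})$ is relatively compact in $B$. For (ii) I would use total boundedness: cover $\mathcal{F}$ by finitely many $L^{p}$-balls of radius $\varepsilon$ around $g_{1},\dots,g_{N}$, exploit that time translation is continuous on $L^{p}([0,T];B)$ (checked first on the dense set of simple or continuous $B$-valued functions, then by density), and combine $\|\tau_{h}(f-g_{j})\|_{L^{p}([0,T-h])}\le\|f-g_{j}\|_{L^{p}([0,T])}$ with the triangle inequality, where $\tau_{h}f(t)=f(t+h)$. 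Choosing $h$ so that $\|\tau_{h}g_{j}-g_{j}\|_{L^{p}}<\varepsilon$ for all $j$ yields $\|\tau_{h}f-f\|_{L^{p}([0,T-h];B)}\lesssim\varepsilon$ uniformly in $f$.

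For the sufficiency I would run the classical mollification-in-time argument. Introduce the Steklov average $(f)_{h}(t)=\frac1h\int_{t}^{t+h}f(s)\,ds$ on $[0,T-h]$. The first key fact is that these averages approximate $\mathcal{F}$ uniformly: from $(f)_{h}(t)-f(t)=\frac1h\int_{0}^{h}(\tau_{\sigma}f(t)-f(t))\,d\sigma$ and Minkowski's integral inequality one gets $\sup_{f\in\mathcal{F}}\|(f)_{h}-f\|_{L^{p}([0,T-h];B)}\le\sup_{0<\sigma\le h}\sup_{f\in\mathcal{F}}\|\tau_{\sigma}f-f\|_{L^{p}([0,T-\sigma];B)}$, which tends to $0$ as $h\to0$ by (ii). The second key fact is that, for each fixed $h$, the family $\mathcal{F}_{h}=\{(f)_{h}:f\in\mathcal{F}\}$ is relatively compact in $C([0,T-h];B)$; I would obtain this from the vector-valued Arzel\`a--Ascoli theorem, using that $\{(f)_{h}(t)\}=\frac1h\{\int_{t}^{t+h}f\}$ is relatively compact in $B$ by (i) (pointwise in $t$), while the identity $(f)_{h}(t')-(f)_{h}(t)=\frac1h\int_{t}^{t+h}(\tau_{t'-t}f(s)-f(s))\,ds$ together with (ii) gives equicontinuity in $t$ that is uniform over $\mathcal{F}$. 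Combining the two facts yields total boundedness of $\mathcal{F}$ in $L^{p}$ on $[0,T-h]$, hence relative compactness there by completeness of $L^{p}([0,T];B)$.

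The hard part will be the boundary layer $[T-h,T]$ discarded by the one-sided average: the argument is only complete once one shows $\sup_{f\in\mathcal{F}}\|f\|_{L^{p}([T-h,T];B)}\to0$ as $h\to0$, that is, a uniform integrability of the family near the endpoint. This is precisely what hypothesis (ii) is designed to enforce --- a concentrating family such as $n^{1/p}\mathbf{1}_{[T-1/n,T]}(t)\,b$ violates the uniform translation-continuity --- and I would extract it through the same averaging identity, bounding $\|f(t)\|_{B}$ on the tail by a backward Steklov average, which lies in a bounded subset of $B$ by (i), plus a translation remainder controlled by (ii). Finally, the case $p=\infty$ with $C([0,T];B)$ is nothing but the direct Arzel\`a--Ascoli statement and follows from the very same two ingredients, with the $L^{p}$ norms replaced throughout by the supremum norm.
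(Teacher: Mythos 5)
The paper itself gives no proof of Proposition \ref{S}: it is quoted, without reference, as a known compactness criterion (it is Theorem~1 of J.~Simon's classical paper on compact sets in $L^{p}(0,T;B)$; note also the misprint in the statement, where condition (i) should read $f\in\mathcal{F}$, not $f\in B$). So your attempt has to be measured against that classical argument, which it largely reconstructs. The necessity half is correct and routine. The core of the sufficiency half is also the right argument: the Steklov averages $(f)_{h}$, the uniform bound $\sup_{f\in\mathcal{F}}\|(f)_{h}-f\|_{L^{p}([0,T-h];B)}\le\sup_{0<\sigma\le h}\sup_{f\in\mathcal{F}}\|\tau_{\sigma}f-f\|_{L^{p}([0,T-\sigma];B)}$ from Minkowski's integral inequality, and relative compactness of $\{(f)_{h}\}$ in $C$ via Arzel\`a--Ascoli, with (i) giving pointwise relative compactness and (ii) equicontinuity. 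Two pieces of bookkeeping need care: for fixed $h$ this only produces a $2\varepsilon(h)$-net, so total boundedness of $\mathcal{F}$ is obtained by fixing $T'<T$ and letting $h\downarrow 0$ with $[0,T']\subset[0,T-h]$, not ``for $h$ fixed''; and (i) requires $0<t_{1}$, so the compactness of $(f)_{h}(0)=\frac1h\int_{0}^{h}f$ is not directly covered --- the clean fix is to prove compactness of $\mathcal{F}$ restricted to every interior interval $[t_{1},t_{2}]$ and treat both endpoints as tails.

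The genuine gap is in the boundary layer, precisely the step you single out as the hard part. Your mechanism --- bound $\|f(t)\|_{B}$ for $t$ near $T$ by a backward Steklov average $\frac1h\int_{t-h}^{t}f$, claimed to lie ``in a bounded subset of $B$ by (i)'' --- fails: (i) concerns fixed intervals $0<t_{1}<t_{2}<T$, and the diameter of the compact set $\bigl\{\int_{t_{1}}^{t_{2}}f\bigr\}$ may blow up as $t_{2}\to T$. Concretely, $f_{n}=n^{2}\mathbf{1}_{[T-1/n,T]}\,b$ satisfies (i) (for fixed $t_{2}<T$ all but finitely many of these integrals vanish), yet its backward averages with endpoint approaching $T$ are unbounded; this family violates (ii), so the theorem is safe, but it shows the uniformity near $T$ cannot come from (i): it must come from (ii) combined with interior boundedness, and in integrated rather than pointwise form. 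The repair is standard: the interior step already gives $M_{\delta}:=\sup_{f\in\mathcal{F}}\|f\|_{L^{p}([0,T-\delta];B)}<\infty$; for $0<\eta<\delta$ and any $h\in(\delta,2\delta)$, shifting by $h$ and using the triangle inequality gives
\[
\|f\|_{L^{p}([T-\eta,T];B)}\le\|\tau_{h}f-f\|_{L^{p}([0,T-h];B)}+\|f\|_{L^{p}([T-\eta-h,T-h];B)},
\]
and averaging in $h$ over $(\delta,2\delta)$, with Fubini and Jensen applied to the last term, yields
\[
\|f\|_{L^{p}([T-\eta,T];B)}\le\sup_{0<h\le2\delta}\|\tau_{h}f-f\|_{L^{p}([0,T-h];B)}+\left(\frac{\eta}{\delta}\right)^{1/p}M_{\delta}
\]
uniformly in $f\in\mathcal{F}$: condition (ii) fixes $\delta$, then $\eta\ll\delta$ kills the second term, and the symmetric computation handles the left endpoint. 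With this substitute for your tail argument (and the bookkeeping above), your outline becomes a complete proof of Simon's criterion, and the $p=\infty$ case follows as you indicate.
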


\subsection{Strong convergence of $Q\bBe$ and $P\bBe$}

\begin{proposition}
\label{p2B}
Let us consider the solution $(\bue, \bBe,\pe,\phie)$ of the Cauchy problem for the system \eqref{MHDA}. Assume that the hypotheses (ID) hold. Then  as $\e\downarrow 0$,
\begin{equation}
Q\bBe \longrightarrow 0 \quad \text{strongly in $ L^{2}([0,T];L^{p}(\R^{3}))$ for any $p\in [4,6)$ }.
\label{33B}
\end{equation}
\end{proposition}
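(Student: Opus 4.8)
The plan is to establish the strong convergence through a mollification argument that separates the high- and low-frequency content of $Q\bBe$. Using the smoothing kernel $\psi_\alpha$ from Lemma \ref{ly}, I would write, for each fixed $\alpha\in(0,1)$,
\[
Q\bBe=\bigl(Q\bBe-(Q\bBe)\ast\psi_\alpha\bigr)+(Q\bBe)\ast\psi_\alpha,
\]
and estimate the two terms in $L^2([0,T];L^p(\R^3))$ separately: the first uniformly in $\e$ by the energy bound, the second by the smallness of $\dive\bBe$.

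For the first term, since $Q$ commutes with $\nabla$ and is bounded on $L^2$, one has $\|\nabla Q\bBe\|_{L^2}\lesssim\|\nabla\bBe\|_{L^2}$, so the approximation estimate \eqref{y1} with $\sigma=3(\tfrac12-\tfrac1p)$ gives
\[
\|Q\bBe-(Q\bBe)\ast\psi_\alpha\|_{L^p_x}\le C_p\,\alpha^{1-\sigma}\|\nabla\bBe\|_{L^2_x}.
\]
Taking the $L^2$ norm in time and invoking the uniform bound \eqref{13} on $\nabla\bBe$ produces a bound of size $C\alpha^{1-\sigma}$ independent of $\e$. Since $p<6$ forces $\sigma<1$, this term vanishes as $\alpha\to0$; the endpoint $p=6$ is excluded precisely because there $\sigma=1$ and the gain disappears.

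For the second term I would use that $Q\bBe=\nabla\Delta^{-1}\dive\bBe$, so that $Q\bBe$ is controlled in negative Sobolev norms by $\dive\bBe$: as $\nabla\Delta^{-1}$ gains one derivative, $\|Q\bBe\|_{W^{-2,4}_x}\lesssim\|\dive\bBe\|_{W^{-3,4}_x}$. The Young type inequality \eqref{y2} with $s=2$, $q=4$ (whence the restriction $p\ge4$, from $q\le p$) then gives
\[
\|(Q\bBe)\ast\psi_\alpha\|_{L^p_x}\le C\,\alpha^{-2-3(\frac14-\frac1p)}\|\dive\bBe\|_{W^{-3,4}_x}.
\]
Taking $L^2$ in time, using H\"older on $[0,T]$ and the convergence $\dive\bBe\to0$ in $L^4([0,T];W^{-3,4}(\R^3))$ from Proposition \ref{c2}, I conclude that for each fixed $\alpha$ this term tends to $0$ as $\e\downarrow0$.

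The proof concludes with the standard order of limits: given $\eta>0$, first fix $\alpha$ small enough that the first term is below $\eta/2$ uniformly in $\e$, and then send $\e\downarrow0$ so that the finite, $\alpha$-dependent constant multiplying $\|\dive\bBe\|_{L^2_tW^{-3,4}_x}$ drives the second term below $\eta/2$. The one genuinely delicate point is exactly this interplay of the two parameters: $Q\bBe$ is never known to be small in any space of positive regularity --- only $\dive\bBe$ is small, and only in negative-order norms --- so one cannot pass to the limit $\e\to0$ directly. The mollification is what turns the uniform $\dot H^1$ energy bound into the smallness of the high-frequency remainder, leaving a low-frequency part that genuinely vanishes together with $\dive\bBe$.
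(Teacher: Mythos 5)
Your proof is correct and takes essentially the same route as the paper: the identical mollifier splitting, with \eqref{y1} plus the energy bound \eqref{13} controlling the high-frequency remainder, and \eqref{y2} plus the dispersive control of $\dive \bBe = -\e\partial_{t}\phie$ (Theorem \ref{t2B}, which you invoke in the repackaged form of Proposition \ref{c2}) controlling the mollified part. The only difference is cosmetic: the paper couples the two parameters explicitly by choosing $\alpha=\e^{1/18}$ and thereby gets the quantitative rate $\|Q\bBe\|_{L^{2}_{t}L^{p}_{x}}\leq C_{T}\,\e^{\frac{6-p}{36p}}$, whereas your soft ``first fix $\alpha$, then send $\e\downarrow 0$'' argument gives the convergence without an explicit rate.
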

\begin{proof}
In order to prove the convergence \eqref{33B} we split $Q\bBe$ as follows
\begin{equation*}
\|Q\bBe\|_{L^{2}_{t}L^{p}_{x}}\leq \|Q\bBe-Q\bBe\ast \psi_{\alpha}\|_{L^{2}_{t}L^{p}_{x}}+\|Q\bBe\ast \psi_{\alpha}\|_{L^{2}_{t}L^{p}_{x}}=J_{1}+J_{2},
\end{equation*}
where $\psi_{\alpha}$ is the smoothing kernel defined in Lemma \ref{ly}.
Now we estimate separately $J_{1}$ and $J_{2}$. For $J_{1}$ by using \eqref{y1} we get
\begin{equation}
\label{50}
J_{1}\leq \alpha^{1-3\left(\frac{1}{2}-\frac{1}{p}\right)}\left(\int_{0}^{T}\|\nabla Q\bBe(t)\|_{L^{2}_{x}}^{2} dt\right)\leq \alpha^{1-3\left(\frac{1}{2}-\frac{1}{p}\right)}\|\nabla \bBe\|_{L^{2}_{t}L^{2}_{x}}.
\end{equation}
Hence from the identity $Q\bBe=-\e^{1/8}\nabla\Delta^{-1}\e^{7/8}\partial_{t}\phie$ and by the inequality \eqref{y2} we get  $J_{2}$ satisfies the following estimate
\begin{align}
J_{2}&\leq \e^{1/8}\|\nabla\Delta^{-1}\e^{7/8}\partial_{t}\phie\ast\psi\|_{L^{2}_{t}L^{p}_{x}}
\leq \e^{1/8}\alpha^{-2-3\left(\frac{1}{4}-\frac{1}{p}\right)}\|\e^{7/8}\partial_{t}\phie\|_{L^{2}_{t}W^{-3,4}_{x}}\notag\\
&\leq \e^{1/8}\alpha^{-2-3\left(\frac{1}{4}-\frac{1}{p}\right)}T^{1/4}\|\e^{7/8}\partial_{t}\phie\|_{L^{4}_{t}W^{-3,4}_{x}}.
\label{51}
\end{align}
Therefore,  summing up \eqref{50} and \eqref{51} and by using \eqref{13} and \eqref{23}, we conclude  for any $p\in [4,6)$ that
\begin{equation}
\|Q\bBe\|_{L^{2}_{t}L^{p}_{x}}\leq C\alpha^{1-3\left(\frac{1}{2}-\frac{1}{p}\right)}+C_{T}\e^{1/8}\alpha^{-2-3\left(\frac{1}{4}-\frac{1}{p}\right)}.
\end{equation}
Finally we choose $\alpha$ in terms of $\e$ in order that the two terms in the right hand side of the previous inequality have the same order, namely
\begin{equation}
\alpha=\e^{1/18}.
\end{equation}
Therefore we obtain
\begin{equation*}
\displaystyle{\|Q\bBe\|_{L^{2}_{t}L^{p}_{x}}\leq C_{T}\e^{ \frac{6-p}{36p}}\quad \text{for any $p\in [4,6)$.}}
\end{equation*}
\end{proof}
It remains to prove the strong compactness of the incompressible component of the magnetic field.  To achieve this goal we need to look at some time regularity properties of $P\bBe$ and to use the Proposition \ref{S}.

\begin{proposition}
Let us consider the solution $(\bue, \bBe,\pe,\phie)$  of the Cauchy problem for the system \eqref{MHDA}. Assume that the hypotheses (ID) hold. Then  as $\e\downarrow 0$
\label{t3B}
\begin{equation}
P\bBe \longrightarrow P\bB, \qquad \text{strongly in $L^{2}(0,T;L^{2}_{loc}(\R^{3}))$}.
\label{43B}
\end{equation}
\end{proposition}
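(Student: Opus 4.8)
The plan is to project the induction equation onto the space of divergence-free fields and then invoke the abstract compactness criterion of Proposition \ref{S} with $B=L^{2}(B_{R})$, $B_{R}\subset\R^{3}$ an arbitrary ball. Applying $P$ to $\eqref{MHDA}_{2}$ and using that $P$ annihilates gradients, $P\nabla\phie=0$, and that $P$ commutes with $\Delta$, one obtains the evolution equation without fast oscillatory term
$$\partial_{t}P\bBe=\Delta P\bBe+P\curl(\bue\times\bBe).$$
This is exactly the decisive structural point anticipated in the Introduction: the time oscillations are carried only by $Q\bBe$, while $P\bBe$ gains time regularity.

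First I would collect the uniform bounds. From \eqref{13}, \eqref{14} and the $L^{p}$-boundedness of $P$, the family $P\bBe$ is bounded, uniformly in $\e$, in $L^{\infty}([0,T];L^{2}(\R^{3}))\cap L^{2}([0,T];\dot H^{1}(\R^{3}))$, hence in $L^{2}_{t}H^{1}_{x}$. For the right-hand side, $\Delta P\bBe$ is bounded in $L^{2}_{t}H^{-1}_{x}$ by \eqref{13}, while by \eqref{17} the product $\bue\times\bBe$ is bounded in $L^{1}_{t}L^{3/2}_{x}$, so that $P\curl(\bue\times\bBe)$ is bounded in $L^{1}_{t}W^{-1,3/2}_{x}$, which embeds into $L^{1}_{t}H^{-m}_{x}$ for $m$ large enough. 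Thus $\partial_{t}P\bBe$ is bounded, uniformly in $\e$, in $L^{2}_{t}H^{-1}_{x}+L^{1}_{t}H^{-m}_{x}$.

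Next I would verify the two conditions of Proposition \ref{S}. For \textbf{(i)}, the time averages $\int_{t_{1}}^{t_{2}}P\bBe\,dt$ lie in a bounded subset of $H^{1}(\R^{3})$ (by the $L^{2}_{t}H^{1}_{x}$ bound and Cauchy-Schwarz in time), hence by Rellich-Kondrachov they form a relatively compact subset of $L^{2}(B_{R})$. For \textbf{(ii)}, I write $P\bBe(\cdot+h)-P\bBe(\cdot)=\int_{t}^{t+h}\partial_{s}P\bBe\,ds$ and use the bound on $\partial_{t}P\bBe$: Fubini together with Cauchy-Schwarz on the $L^{2}_{t}H^{-1}_{x}$ part and the splitting $\|g\|_{L^{2}_{t}}\le\|g\|_{L^{\infty}_{t}}^{1/2}\|g\|_{L^{1}_{t}}^{1/2}$ on the $L^{1}_{t}H^{-m}_{x}$ part give
$$\|P\bBe(\cdot+h)-P\bBe(\cdot)\|_{L^{2}([0,T-h];H^{-m}_{x})}\le C\,(h+h^{1/2}),$$
uniformly in $\e$. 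Multiplying by a cutoff $\chi\in C^{\infty}_{0}$ equal to $1$ on $B_{R}$, and interpolating in space via $L^{2}=[H^{-m},H^{1}]_{\theta}$ (with $\theta=m/(m+1)$) followed by H\"older in time against the uniform $L^{2}_{t}H^{1}_{x}$ bound, one controls $\|P\bBe(\cdot+h)-P\bBe(\cdot)\|_{L^{2}([0,T-h];L^{2}(B_{R}))}$ by a power of $h$, uniformly in $\e$; this is condition \textbf{(ii)}.

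Conditions \textbf{(i)}-\textbf{(ii)} and Proposition \ref{S} then yield relative compactness of $\{P\bBe\}$ in $L^{2}(0,T;L^{2}(B_{R}))$ for every $R$, i.e. in $L^{2}(0,T;L^{2}_{loc}(\R^{3}))$. To identify the limit, note that $\bBe\rightharpoonup\bB$ weakly and $P$ is a bounded linear operator, hence $P\bBe\rightharpoonup P\bB$; any strong limit point must then coincide with $P\bB$, which forces the convergence of the whole sequence and gives \eqref{43B}. I expect the main obstacle to be condition \textbf{(ii)}: the bound on $\partial_{t}P\bBe$ controls time translations only in a negative-order Sobolev norm, and it moreover splits into two distinct time-integrability classes ($L^{2}_{t}$ and $L^{1}_{t}$), so the smallness of the translations in $L^{2}_{loc}$ has to be recovered through the combined interpolation-and-localization argument against the $H^{1}$ spatial regularity rather than directly.
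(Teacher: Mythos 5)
Your proof is correct, and it rests on the same two pillars as the paper's: the structural observation that applying $P$ to $\eqref{MHDA}_{2}$ annihilates the singular term $\nabla\phie$, so that $P\bBe$ (unlike $Q\bBe$) has uniformly controlled time regularity, and the compactness criterion of Proposition \ref{S}. Where you genuinely diverge is in how the equicontinuity condition \textbf{(ii)} is verified. The paper never introduces $\partial_{t}P\bBe$ as an element of negative-order Sobolev spaces: it works directly with the increment $\bZ^{\e}=\bBe(t+h)-\bBe(t)$, splits $\|P\bZ^{\e}\|_{L^{2}}^{2}=I_{1}+I_{2}$ using the mollifier $\psi_{\alpha}$ of Lemma \ref{ly}, controls $I_{1}$ by \eqref{y1} together with the $L^{2}_{t}\dot H^{1}_{x}$ bound \eqref{13}, controls $I_{2}$ by writing $P\bZ^{\e}$ in integral form via the equation and invoking the Young-type inequality \eqref{y2}, and finally optimizes $\alpha=h^{2/7}$, obtaining the explicit global-in-space rate $\|P\bBe(\cdot+h)-P\bBe(\cdot)\|_{L^{2}([0,T]\times\R^{3})}\leq C_{T}h^{1/7}$. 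Your route is instead the classical Aubin--Lions--Simon one: bound $\partial_{t}P\bBe$ in $L^{2}_{t}H^{-1}_{x}+L^{1}_{t}H^{-m}_{x}$ (your embedding $W^{-1,3/2}(\R^{3})\hookrightarrow H^{-3/2}(\R^{3})$, from \eqref{17}, is correct), deduce translations of size $h+h^{1/2}$ in $H^{-m}$, and recover $L^{2}(B_{R})$ by interpolating against the uniform $L^{2}_{t}H^{1}_{x}$ bound after a cutoff. The two mechanisms are morally equivalent --- mollification at scale $\alpha$ is a hands-on version of the $[H^{-m},H^{1}]$ interpolation --- but yours is more modular and standard, at the price of a weaker (though still trackable, of order $h^{1/(2(m+1))}$) rate and a localization step, while the paper's computation yields a clean quantitative rate on all of $\R^{3}$. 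Two of your additions are actually worth keeping: the explicit identification of the limit (weak convergence $P\bBe\rightharpoonup P\bB$ forces every strong limit point to equal $P\bB$, hence the full sequence converges), which the paper leaves implicit, and the explicit verification of condition \textbf{(i)} of Proposition \ref{S} via Rellich--Kondrachov, which the paper also omits.
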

\begin{proof}
From \eqref{13} an \eqref{14} we know that $P\bBe$ is uniformly bounded  in $L^{2}_{t}\dot{H}^{1}_{x}$. The strong convergence \eqref{43B} follows by applying the Proposition \ref{S}, provided that for  all $h\in(0,1)$, we have
\begin{equation}
\label{34B}
\|P\bBe(t+h)-P\bBe(t)\|_{L^{2}([0,T]\times \R^{3})}\leq C_{T}h^{1/7}.
\end{equation}
The rest of the proof is devoted to show \eqref{34B}. Let us set $\bZ^{\e}=\bBe(t+h)-\bBe(t)$, we have
\begin{align}
\hspace{-0,25 cm}\|P\bBe(t+h)-P\bBe(t)\|^{2}_{L^{2}([0,T]\times \R^{3})}&=\int_{0}^{T}\!\!\int_{\R^{3}}dtdx(P\bZ^{\e})\cdot(P\bZ^{\e}-P\bZ^{\e}\ast \psi_{\alpha})\notag\\&+\int_{0}^{T}\!\!\int_{\R^{3}}dtdx(P\bZ^{\e})\cdot(P\bZ ^{\e}\ast \psi_{\alpha})=I_{1}+I_{2}.
\label{ 35}
\end{align}
By using \eqref{y1} we can estimate $I_{1}$ in the following way
\begin{align}
I_{1}&\leq \|P\bZ^{\e}\|_{L^{\infty}_{t}L^{2}_{x}}\int_{0}^{T}\|P\bZ^{\e}(t)-(P\bZ^{\e}\ast \psi_{\alpha})(t)\|_{L^{2}_{x}}dt\notag\\&\lesssim \alpha T^{1/2}\|\bBe\|_{L^{\infty}_{t}L^{2}_{x}}\|\nabla\bBe\|_{L^{2}_{t,x}}.
\label{36}
\end{align}
Let us reformulate $P\bZ^{\e}$ in integral form by using the equation $\eqref{MHDA}_{2}$, hence
\begin{align}
\hspace{-0.3cm}I_{2}\leq\left|\int_{0}^{T}\!\!\!dt\!\!\int_{\R^{3}}\!\!\!dx \!\!\int_{t}^{t+h}\!\!\!ds(\Delta \bBe-\curl(\bue\times \bBe)) (s,x)\cdot (P\bZ^{\e}\ast \psi_{\alpha})(t,x)\right|.
\label{37}
\end{align}
Then integrating by parts and by using \eqref{y2}, with $p=\infty$ and $q=2$, we deduce
\begin{align}
I_{2}&\leq h\|\nabla\bBe\|^{2}_{L^{2}_{t,x}}+C\alpha^{-5/2}T^{1/2}\|\bBe\|_{L^{\infty}_{t}L^{2}_{x}}\left(\!h\!\int_{t}^{t+h}\!\!\!\|\bue\times \bBe\|^{2}_{L^{1}_{x}}ds\right)^{1/2}\notag\\
&\leq h\|\nabla\bBe\|^{2}_{L^{2}_{t,x}}+C\alpha^{-5/2}T^{1/2}h\|\bBe\|_{L^{\infty}_{t}L^{2}_{x}}\|\bue\times\bBe\|_{L^{2}_{t}L^{1}_{x}}.
\label{38}
\end{align}
Summing up $I_{1}$, $I_{2}$ and by taking into account \eqref{13}, \eqref{14}, \eqref{17},  we have
\begin{equation}
\|P\bBe(t+h)-P\bBe(t)\|^{2}_{L^{2}([0,T]\times \R^{3})}\leq C(\alpha T^{1/2}+h\alpha^{-5/2}T^{1/2}+h),
\label{39}
\end{equation}
and, by choosing $\alpha=h^{2/7}$, we end up with \eqref{34B}.
\end{proof}

\subsection{Strong convergence of $Q\bue$ and $P\bue$}
The strong convergence for $Q\bue$ and $P\bue$ can be performed with the same method as in the previous section. In particular we are able to prove the following propositions.

\begin{proposition}
Let us consider the solution $(\bue, \bBe,\pe,\phie)$ of the Cauchy problem for the system \eqref{MHDA}. Assume that the hypotheses (ID) hold. Then  as $\e\downarrow 0$,
\begin{equation}
Q\bue \longrightarrow 0 \quad \text{strongly in $ L^{2}([0,T];L^{p}(\R^{3}))$ for any $p\in [4,6)$ }.
\label{33}
\end{equation}
\label{p2}
\end{proposition}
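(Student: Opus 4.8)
The plan is to run the argument of Proposition \ref{p2B} verbatim, with the velocity constraint $\eqref{MHDA}_{3}$ playing the role of the magnetic constraint $\eqref{MHDA}_{4}$. Fixing $p\in[4,6)$ and the smoothing kernel $\psi_{\alpha}$ of Lemma \ref{ly}, I would split
\[
\|Q\bue\|_{L^{2}_{t}L^{p}_{x}}\leq \|Q\bue-Q\bue\ast\psi_{\alpha}\|_{L^{2}_{t}L^{p}_{x}}+\|Q\bue\ast\psi_{\alpha}\|_{L^{2}_{t}L^{p}_{x}}=J_{1}+J_{2}.
\]
For $J_{1}$ I would apply the mollification estimate \eqref{y1} with $\sigma=3\left(\frac{1}{2}-\frac{1}{p}\right)$ together with the uniform gradient bound \eqref{13}, obtaining $J_{1}\lesssim \alpha^{1-3\left(\frac{1}{2}-\frac{1}{p}\right)}\|\nabla\bue\|_{L^{2}_{t,x}}$, a bound uniform in $\e$ that vanishes as $\alpha\to 0$.

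The core of the argument is $J_{2}$, and here I would exploit $\eqref{MHDA}_{3}$, namely $\e\partial_{t}\pe+\dive\bue=0$, which yields the exact representation
\[
Q\bue=\nabla\Delta^{-1}\dive\bue=-\e\,\nabla\Delta^{-1}\partial_{t}\pe=-\e^{1/8}\nabla\Delta^{-1}\bigl(\e^{7/8}\partial_{t}\pe\bigr),
\]
the precise analogue of the identity $Q\bBe=-\e^{1/8}\nabla\Delta^{-1}\e^{7/8}\partial_{t}\phie$ used before. Since $\e^{7/8}\partial_{t}\pe$ lives in $W^{-3,4}_{x}$, the field $\nabla\Delta^{-1}\bigl(\e^{7/8}\partial_{t}\pe\bigr)$ lies in $W^{-2,4}_{x}$, so applying the Young-type inequality \eqref{y2} with $s=2$, $q=4$ and then Hölder in time exactly as in \eqref{51} gives
\[
J_{2}\lesssim \e^{1/8}\alpha^{-2-3\left(\frac{1}{4}-\frac{1}{p}\right)}T^{1/4}\bigl\|\e^{7/8}\partial_{t}\pe\bigr\|_{L^{4}_{t}W^{-3,4}_{x}}.
\]
Summing $J_{1}+J_{2}$ and balancing the two powers of $\alpha$ by the choice $\alpha=\e^{1/18}$ (the same value as before, since $a+b=\tfrac94$ is independent of $p$) would then produce $\|Q\bue\|_{L^{2}_{t}L^{p}_{x}}\leq C_{T}\,\e^{(6-p)/(36p)}\to 0$.

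The one genuinely new point, where the velocity case departs from the magnetic one, is the uniform bound $\e^{7/8}\|\partial_{t}\pe\|_{L^{4}_{t}W^{-3,4}_{x}}\leq C$. For $\phie$ this was immediate from the purely linear wave equation \eqref{20B}, but the pressure wave equation \eqref{20} carries the quadratic source terms, so I must invoke the full Strichartz bound \eqref{23} of Theorem \ref{t2} and verify that its right-hand side stays bounded in $\e$: the data contributions $\sqrt{\e}\|\pe_{0}\|_{L^{2}_{x}}$ and $\|\dive\bu^{\e}_{0}\|_{H^{-1}_{x}}$ are controlled by (ID) and the $L^{2}$-boundedness of $\bu^{\e}_{0}$, the term $\sqrt{T}\|\dive\bue\|_{L^{2}_{t}L^{2}_{x}}$ by \eqref{13}, and, crucially, the nonlinear term $\|(\bue\cdot\nabla)\bue+\frac{1}{2}(\dive\bue)\bue+\curl\bBe\times\bBe\|_{L^{1}_{t}L^{3/2}_{x}}$ by the uniform bound \eqref{17}. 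Once this is granted, the remainder of the proof is identical to that of Proposition \ref{p2B}.
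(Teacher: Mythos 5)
Your proposal is correct and takes essentially the same approach the paper intends: the paper proves Proposition \ref{p2} by declaring it "the same method as in the previous section," i.e.\ the mollification splitting $J_{1}+J_{2}$, the identity $Q\bue=-\e^{1/8}\nabla\Delta^{-1}\bigl(\e^{7/8}\partial_{t}\pe\bigr)$ coming from $\eqref{MHDA}_{3}$, the Young-type inequality \eqref{y2}, and the balancing choice $\alpha=\e^{1/18}$. Your one added verification, that the right-hand side of the Strichartz bound \eqref{23} remains uniformly bounded (in particular the nonlinear source via \eqref{17} and the data via (ID)), is precisely the point where Theorem \ref{t2} differs from Theorem \ref{t2B}, and you handle it correctly.
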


\begin{proposition}
Let us consider the solution $(\bue, \bBe,\pe,\phie)$  of the Cauchy problem for the system \eqref{MHDA}. Assume that the hypotheses (ID) hold. Then  as $\e\downarrow 0$
\label{t3}
\begin{equation}
P\bue \longrightarrow P\bu, \qquad \text{strongly in $L^{2}(0,T;L^{2}_{loc}(\R^{3}))$}.
\label{43}
\end{equation}
\end{proposition}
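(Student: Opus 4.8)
The plan is to mirror exactly the argument used for $P\bBe$ in Proposition \ref{t3B}, applying the compactness criterion of Proposition \ref{S} with $B=L^{2}_{loc}(\R^{3})$. The bounds \eqref{13} and \eqref{14} guarantee that $P\bue$ is uniformly bounded in $L^{2}([0,T];\dot H^{1}(\R^{3}))$; since $\dot H^{1}(\R^{3})$ embeds compactly into $L^{2}_{loc}(\R^{3})$ on bounded sets, the time averages $\int_{t_{1}}^{t_{2}}P\bue(t)\,dt$ lie in a bounded subset of $\dot H^{1}$ and are therefore relatively compact in $L^{2}_{loc}$, so condition (i) of Proposition \ref{S} holds. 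Everything then reduces to establishing the time-equicontinuity estimate
\begin{equation*}
\|P\bue(t+h)-P\bue(t)\|_{L^{2}([0,T]\times\R^{3})}\le C_{T}\,h^{1/7},\qquad h\in(0,1),
\end{equation*}
which is the analogue of \eqref{34B}.

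To prove this I would set $\bZ^{\e}=\bue(t+h)-\bue(t)$ and split the squared norm through the mollifier $\psi_{\alpha}$ of Lemma \ref{ly} into $I_{1}+I_{2}$, where $I_{1}$ pairs $P\bZ^{\e}$ with $P\bZ^{\e}-P\bZ^{\e}\ast\psi_{\alpha}$ and $I_{2}$ pairs $P\bZ^{\e}$ with $P\bZ^{\e}\ast\psi_{\alpha}$. The term $I_{1}$ is controlled by the smoothing inequality \eqref{y1} together with \eqref{13} and \eqref{14}, giving $I_{1}\lesssim\alpha T^{1/2}\|\bue\|_{L^{\infty}_{t}L^{2}_{x}}\|\nabla\bue\|_{L^{2}_{t,x}}$. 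For $I_{2}$ the decisive point is to rewrite $P\bZ^{\e}$ in integral form using the momentum equation $\eqref{MHDA}_{1}$: applying the Leray projector $P$ annihilates the pressure gradient $\nabla\pe$, so that
\begin{equation*}
\partial_{t}P\bue=P\bigl(\mu\Delta\bue-(\bue\cdot\nabla)\bue-\tfrac12(\dive\bue)\bue+\curl\bBe\times\bBe\bigr),
\end{equation*}
and hence $P\bZ^{\e}=\int_{t}^{t+h}P(\dots)(s)\,ds$. I would then integrate by parts in $x$, moving one derivative onto the smoothed factor $P\bZ^{\e}\ast\psi_{\alpha}$: the viscous term yields a contribution $\lesssim h\|\nabla\bue\|^{2}_{L^{2}_{t,x}}$, while the convective and Lorentz terms, which by \eqref{17} are bounded in $L^{2}_{t}L^{1}_{x}$ and carry (after using the identity \eqref{v1} to expose their divergence structure) a single spatial derivative, are estimated by the Young-type inequality \eqref{y2} with $p=\infty$, $q=2$. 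This produces the factor $\alpha^{-5/2}$ and the bound $I_{2}\lesssim h\|\nabla\bue\|^{2}_{L^{2}_{t,x}}+C\alpha^{-5/2}T^{1/2}h\|\bue\|_{L^{\infty}_{t}L^{2}_{x}}(\|(\bue\cdot\nabla)\bue\|_{L^{2}_{t}L^{1}_{x}}+\|\curl\bBe\times\bBe\|_{L^{2}_{t}L^{1}_{x}})$. Summing $I_{1}$ and $I_{2}$ gives $\|P\bZ^{\e}\|^{2}_{L^{2}}\le C(\alpha T^{1/2}+h\alpha^{-5/2}T^{1/2}+h)$, and the choice $\alpha=h^{2/7}$ delivers the exponent $1/7$.

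It remains to identify the limit. From part (i) of Theorem \ref{tM}, $\bue\rightharpoonup\bu$ weakly in $L^{2}_{t}\dot H^{1}_{x}$, and since $P$ is a bounded operator it preserves weak limits, so $P\bue\rightharpoonup P\bu$; combined with the strong $L^{2}(0,T;L^{2}_{loc})$ convergence just obtained, the limit must be $P\bu$, which proves \eqref{43}. Finally, because $\dive\bue\to0$ (Proposition \ref{c2}) forces $\dive\bu=0$, one has $P\bu=\bu$. The main obstacle is the $I_{2}$ estimate: the whole scheme works only because $P$ removes the fast acoustic pressure oscillations carried by $\nabla\pe$, leaving a nonlinearity in genuine divergence form whose integration by parts against $P\bZ^{\e}\ast\psi_{\alpha}$ costs just one power of $\alpha^{-1}$; balancing this singular factor against the gain $\alpha$ from \eqref{y1} is exactly what fixes $\alpha=h^{2/7}$ and yields the H\"older-in-time modulus needed for Proposition \ref{S}.
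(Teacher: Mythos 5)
Your proof is correct and is essentially the paper's own argument: the paper establishes Proposition \ref{t3} by simply repeating the method of Proposition \ref{t3B} (the mollifier splitting $I_{1}+I_{2}$, the integral reformulation of $P\bZ^{\e}$ via the projected momentum equation, which eliminates $\nabla\pe$, the estimates \eqref{y1}--\eqref{y2} combined with the bounds \eqref{13}, \eqref{14}, \eqref{17}, and the choice $\alpha=h^{2/7}$ giving the $h^{1/7}$ equicontinuity required by Proposition \ref{S}), deferring the details to \cite{DM06}, \cite{DM12}. Your write-up supplies exactly those details (your $\alpha^{-5/2}$ factor matches the paper's; one could even get the slightly better $\alpha^{-3/2}$ by pairing the nonlinear terms, bounded in $L^{2}_{t}L^{1}_{x}$, directly against the $L^{\infty}_{x}$ norm of the mollified factor via \eqref{y2} with $s=0$), so there is no substantive difference.
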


For more details on the proof of the Propositions \ref{p2} and \ref{t3} see also \cite{DM06}, \cite{DM12}.

\section{Proof of the Theorem \ref{tM}}
The weak convergence in {\bf (i)}  is a consequence of the uniform bounds \eqref{13} and \eqref{14}. The strong convergences in {\bf (ii)} and {\bf (iii)} follows from Propositions \ref{t3},  \ref{t3B}, \ref{p2},  \ref{p2B}.
In order to prove {\bf (iv)}  we apply the Leray projector $P$ to the equations $\eqref{MHDA}_{1,2}$,  then, taking into account the identities \eqref{v1}-\eqref{v2} it follows
\begin{equation}
\partial_{t}P\bue-\mu\Delta P\bue+P(\left(\bue\cdot\nabla\right)\bue) +\frac{1}{2}P((\dive \bue)\bue)-P((\bBe\cdot\nabla)\bBe)=0,
\label{pu}
\end{equation}
\begin{equation}
\partial_{t}P\bBe-\Delta P\bBe+\curl (\bue\times \bBe)=0.
\label{pb}
\end{equation}
Now, by testing the equations \eqref{pu} and \eqref{pb} against a test function it is easy to pass into the limit in the linear terms. For what concerns the nonlinear terms it is enough to decompose the vector fields $\bue$ and $\bBe$ in their divergence free part and gradient part end then to pass into the limit by using {\bf (ii)} and {\bf (iii)}. The nonlinear term $\curl (\bue\times \bBe)$ in the equation \eqref{pb} can be handled by combining the previous decomposition with the identity \eqref{v2}. For more details in this step see \cite{DM06}, \cite{DM10}, \cite{DM12}.

Finally we prove the energy inequality \eqref{en}. By using the weak lower semicontinuity of the weak limits, the hypotheses (ID) and by denoting 
by $\chi$ the weak-limit of $\sqrt{\e}\pe $ and by $\Xi$ the weak-limit of $\sqrt{\e}\phie $ ,  we have

$$\frac{1}{2}\int_{\R^{3}}(|\chi|^{2}+|\Xi|^{2})dx$$
$$+\frac{1}{2}\int_{\R^{3}}(|\bu(x,t)|^{2}+|\bB(x,t)|^{2})dx+\int_{0}^{T}\!\!\int_{\R^{3}}(|\nabla \bu(x,t)|^{2}+|\nabla \bB(x,t)|^{2})dxdt$$
$$\leq
\liminf_{\e\to 0}\left(\frac{1}{2}\int_{\R^{3}}(|\sqrt{\e}\pe|^{2}+|\sqrt{\e}\phie|^{2})dx+\frac{1}{2}\int_{\R^{3}}(|\bue(x,t)|^{2}+|\bBe(x,t)|^{2})dx\right)$$
$$+\liminf_{\e\to 0}\int_{0}^{T}\!\!\int_{\R^{3}}(|\nabla \bue(x,t)|^{2}+|\nabla \bBe(x,t)|^{2})dxdt$$
$$=\liminf_{\e\to 0}\frac{1}{2}\int_{\R^{3}}\left(|\bu^{\e}_{0}|^{2}+|\bB^{\e}_{0}|^{2}+|\sqrt{\e}\pe_{0}|^{2}+|\sqrt{\e}\phie_{0}|^{2}\right)dx=\frac{1}{2}\int_{\R^{3}}(|\bu_{0}|^{2}+|\bB_{0}|^{2})dx.
$$

\section{General unbounded domain case}

This last section is devoted the analysis of the {\em artificial compressibility approximation} in the case of a general {\em unbounded domain} as for example the half-space, an exterior domain, an unbounded strip, etc. For what concerns the proof of the energy estimate \eqref{9} and of the strong convergence of the soleinoidal components $P\bue$ and $P\bBe$ of the velocity and magnetic field we can easily adapt the proof in this case. The main difference is in the method used to prove the compactness of the gradient component of $\bue$ and $\bBe$. As already pointed out the main issue is to control the time oscillations due to the presence of acoustic waves. Since we are in an unbounded domain also in this case we have a dispersive phenomena for the acoustic waves but it cannot be handled in the same way as in the previous part. In fact, as is well known, the Strichartz estimates in unbounded domain different than $\R^{d}$ become more delicate and usually require some restrictions on the shape and geometry of the boundary. For instance in the papers \cite{Burq03}, \cite{Met04}, \cite{SmSo00} the domain is required to be star-shaped or non trapping.
Here we carry out an approach which can be applied to any unbounded domain $\Omega$  which satisfy the hypotheses
\begin{itemize}
\item the \emph{point spectrum} of the Neumann Laplacian $\Delta_N$ in
$L^2(\Omega)$ is empty.
\end{itemize}
Although the absence of eigenvalues for the Neumann Laplacian represents, in
general, a delicate and highly unstable problem there are numerous examples of such domains - the whole space $\R^3$, the half-space, exterior domains, unbounded strips, tube-like domains and waveguides.
Similar approach has been used also in \cite{F10a}, \cite{DFN12}.

Here, for simplicity we will perform our analysis for an exterior domain (the proof can be easily adapted to other unbounded domains) end we will show the convergence for the gradient part of the velocity field $\bue$. The convergence for $\bBe$ can be handled in the same way.

We say that $\Omega$ is an exterior domain if it is the complement in $\R^{3}$ of a compact set  (usually called compact obstacle). Since $\Omega$ has a boundary $\partial\Omega$ we have to be careful in defining the Leray projectors. In particular for a vector field ${\bf v}$ we define the Helmholtz decomposition as
\begin{equation}
{\bf v}=P{\bf v}+Q{\bf v},
\end{equation}
where
$$\dive P{\bf v}=0, \ Q{\bf v}=\nabla \Psi, \ \Delta \psi=\dive {\bf v},\ \nabla \Psi\cdot {\bf n}|_{\partial\Omega}= {\bf v}\cdot {\bf n}|_{\partial\Omega},$$
 and ${\bf n}$ is the exterior normal to the boundary $\partial\Omega$.
We consider the system \eqref{MHDA} endowed with the initial conditions \eqref{ID}, we specify the following asymptotic behavior of $\bue$ and $\bBe$
$$\bue\to 0\quad \bBe\to 0\qquad \text{as $|x|\to 0$},$$
We assume that the boundary is impermeable, namely
$$\bue\cdot {\bf n}|_{\partial\Omega}=0\qquad \bBe\cdot {\bf n}|_{\partial\Omega}=0\qquad $$
and the no-slip boundary conditions hold
$$\bue|_{\partial\Omega}=0\qquad  \bBe|_{\partial\Omega}=0.$$
First of all we rewrite the equations $\eqref{MHDA}_{1}$ and $\eqref{MHDA}_{3}$ in terms of the acoustic potential $\Psi^{\e}$, 
\begin{equation}
\begin{cases}
\e\partial_{t}\pe+ \Delta \Psi^{\e}=0\\
\partial_{t}\Psi^{\e}+ \pe=f^{\e},\\
\end{cases}
\label{AE}
\end{equation}
where ${\bf F}^{\e}=\mu\Delta \bue-\left(\bue\cdot\nabla\right)\bue -\frac{1}{2}(\dive \bue)\bue+\curl \bBe \times \bBe$ and $\nabla f^{\e} =Q{\bf F}^{\e}$.
The solutions for \eqref{AE} may be expressed by means of DuhamelÕs formula as
\begin{align}
\label{84}
 \Psi^{\e}(t) &= \frac{1}{2}
\exp \left( {\rm i} \sqrt{ -\Delta_{N}} \frac{t}{\e} \right) \left[ \Psi^{\e}(0) + \frac{{\rm i}}{\sqrt{ -\Delta_{N}}}\pe(0)\right]\\
&+\frac{1}{2}
\exp \left( -{\rm i} \sqrt{ -\Delta_{N}} \frac{t}{\e} \right) \left[ \Psi^{\e}(0) - \frac{{\rm i}}{\sqrt{ -\Delta_{N}}}\pe(0)\right]\notag\\
& +\frac{1}{2}\int_{0}^{t}\left(\exp \left( {\rm i} \sqrt{ -\Delta_{N}} \frac{t-s}{\e} \right)+\exp \left(- {\rm i} \sqrt{ -\Delta_{N}} \frac{t-s}{\e} \right)\right)[f^{\e}(s)]ds,\notag
\end{align}
where $\Delta_{N}$ is the Laplace operator endowed with the homogenous Neumann boundary conditions in the Hilbert space $L^{2}(\Omega)$.  By observing the formula \eqref{84} we see that $ \Psi^{\e}$ oscillates fast in time at the frequency $1/\e$ as soon as $-\Delta_{N}$ possesses a positive eigenvalue. Since we are in an unbounded domain $-\Delta_{N}$  has an empty point spectrum, this allows us to apply the celebrated RAGE theorem, see Cycon et al. \cite{CyFrKiSi}(Theorem 5.8), see also \cite{DFN12}:

\begin{theorem}
Let $H$ be a Hilbert space, ${A}: {\cal D}({A}) \subset H \to H$ a
self-adjoint operator, $C: H \to H$ a compact operator, and $P_c$
the orthogonal projection onto the space of continuity $H_c$ of $A$,
specifically,
$$
H = H_c \oplus {\rm cl}_H \Big\{ {\rm span} \{ w \in H \ | \ w \ \mbox{an eigenvector of} \ A \} \Big\}.
$$
Then
\begin{equation}
\label{rage1}
\left\| \frac{1}{\tau} \int_0^\tau \exp(-{\rm i} tA ) C P_c \exp(
{\rm i} tA ) \ dt \right\|_{{\cal L}(H)} \to 0 \ \mbox{as}\ \tau
\to \infty.
\end{equation}
\label{rage}
\end{theorem}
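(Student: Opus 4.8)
The plan is to deduce the statement from Wiener's equidistribution theorem through a sequence of reductions, the last of which upgrades a pointwise time-average limit to an operator-norm limit. Write $M_{\tau}(C)=\frac{1}{\tau}\int_{0}^{\tau}\exp(-{\rm i}tA)\,C\,P_{c}\,\exp({\rm i}tA)\,dt$. First I would reduce $C$ to a rank-one operator. Since $\exp(\pm{\rm i}tA)$ are unitary and $P_{c}$ is an orthogonal projection, for every bounded $D$ one has $\|\exp(-{\rm i}tA)\,D\,P_{c}\,\exp({\rm i}tA)\|_{\mathcal{L}(H)}\le\|D\|_{\mathcal{L}(H)}$, and averaging in $t$ preserves this bound, so $\|M_{\tau}(C)-M_{\tau}(C_{n})\|_{\mathcal{L}(H)}\le\|C-C_{n}\|_{\mathcal{L}(H)}$. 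As finite-rank operators are dense in the compact operators for the operator norm and $M_{\tau}$ is linear, it suffices to prove $\|M_{\tau}(C)\|\to0$ for $C$ of rank one: the general case follows from $\limsup_{\tau}\|M_{\tau}(C)\|\le\|C-C_{n}\|\to0$.

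For a rank-one $C$, say $Cx=(\phi,x)\psi$ with $(\cdot,\cdot)$ the inner product of $H$, a direct computation using the self-adjointness of $P_{c}$ gives $M_{\tau}x=\frac{1}{\tau}\int_{0}^{\tau}(\exp(-{\rm i}tA)P_{c}\phi,\,x)\,\exp(-{\rm i}tA)\psi\,dt$. Pairing with $y$, $\|y\|\le1$, and using the Cauchy--Schwarz inequality in $t$, I bound $\|M_{\tau}\|_{\mathcal{L}(H)}\le\|\psi\|\,\|R_{\tau}\|_{\mathcal{L}(H)}^{1/2}$, where $R_{\tau}$ is the positive self-adjoint operator characterized by $(x,R_{\tau}x)=\frac{1}{\tau}\int_{0}^{\tau}|(\exp(-{\rm i}tA)g,\,x)|^{2}\,dt$ with $g:=P_{c}\phi\in H_{c}$. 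Everything then reduces to the \emph{uniform} statement $\|R_{\tau}\|_{\mathcal{L}(H)}=\sup_{\|x\|\le1}\frac{1}{\tau}\int_{0}^{\tau}|(\exp(-{\rm i}tA)g,\,x)|^{2}\,dt\to0$.

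To establish this I would pass to the spectral representation of $A$ restricted to the cyclic subspace generated by $g$ (which is $A$-invariant and contains $\exp(-{\rm i}tA)g$). There $A$ acts as multiplication by $\lambda$ on $L^{2}(\mathbb{R},d\mu_{g})$, with $\mu_{g}$ the finite spectral measure of $g$; crucially, because $g\in H_{c}$ the measure $\mu_{g}$ is non-atomic. In this model $R_{\tau}=U_{\tau}^{*}U_{\tau}$, where $U_{\tau}\colon L^{2}(\mu_{g})\to L^{2}([0,\tau],\tau^{-1}dt)$ sends $x$ to the generalized Fourier transform $t\mapsto\int e^{{\rm i}t\lambda}x(\lambda)\,d\mu_{g}(\lambda)$, so that $U_{\tau}^{*}U_{\tau}$ is the integral operator on $L^{2}(\mu_{g})$ with kernel $K_{\tau}(\lambda,\lambda')=\frac{1}{\tau}\int_{0}^{\tau}e^{{\rm i}t(\lambda'-\lambda)}\,dt$. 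Then
\[
\|R_{\tau}\|_{\mathcal{L}(H)}\le\|U_{\tau}^{*}U_{\tau}\|_{HS}=\left(\iint|K_{\tau}(\lambda,\lambda')|^{2}\,d\mu_{g}(\lambda)\,d\mu_{g}(\lambda')\right)^{1/2},
\]
and since $|K_{\tau}|\le1$ while $K_{\tau}(\lambda,\lambda')\to0$ for $\lambda\neq\lambda'$, dominated convergence (with respect to the finite measure $\mu_{g}\otimes\mu_{g}$) forces $\|U_{\tau}^{*}U_{\tau}\|_{HS}\to0$, the diagonal $\{\lambda=\lambda'\}$ carrying no $\mu_{g}\otimes\mu_{g}$ mass precisely because $\sum_{\lambda}\mu_{g}(\{\lambda\})^{2}=0$.

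I expect this last point to be the main obstacle. Wiener's theorem gives the limit of $\frac{1}{\tau}\int_{0}^{\tau}|(\exp(-{\rm i}tA)g,x)|^{2}\,dt$ only \emph{pointwise} in $x$ (the limiting value being the atomic sum $\sum_{\lambda}|(E(\{\lambda\})g,x)|^{2}=0$), whereas the conclusion of the theorem is an \emph{operator-norm} statement requiring uniformity over $\|x\|\le1$. It is exactly the Hilbert--Schmidt estimate above, resting on the non-atomicity of the spectral measure of $H_{c}$ (guaranteed in the application by the empty point spectrum of $\Delta_{N}$), that converts the pointwise limit into the uniform one and closes the argument.
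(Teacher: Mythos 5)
Your argument is correct. Bear in mind, though, that the paper offers no proof of this statement to compare against: it is the RAGE theorem, quoted (following \cite{DFN12}) from Cycon, Froese, Kirsch and Simon \cite{CyFrKiSi}, Theorem 5.8, and then simply applied with $A=\sqrt{-\Delta_{N}}$, $C=\chi^{2}G(-\Delta_{N})$. What you have reconstructed is essentially the classical proof from that reference: density of finite-rank operators, linearity, and the unitary-invariance bound $\|M_{\tau}(C)-M_{\tau}(C_{n})\|_{\mathcal{L}(H)}\le\|C-C_{n}\|_{\mathcal{L}(H)}$ reduce matters to rank one; Cauchy--Schwarz in $t$ isolates the positive operator $R_{\tau}$ built from $g=P_{c}\phi$; and the conclusion $\|R_{\tau}\|_{\mathcal{L}(H)}\to 0$ is a Wiener-type statement about the non-atomic spectral measure $\mu_{g}$. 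Your only real departure is cosmetic but clean: rather than quoting Wiener's theorem for $\frac{1}{\tau}\int_{0}^{\tau}|\widehat{\mu_{g}}(t)|^{2}\,dt$ and relating that quantity to the Hilbert--Schmidt norm of $R_{\tau}$ by a Fubini computation, you realize $R_{\tau}=U_{\tau}^{*}U_{\tau}$ directly as an integral operator on $L^{2}(\mu_{g})$ with kernel $K_{\tau}$ and kill its Hilbert--Schmidt norm by dominated convergence, the diagonal being $\mu_{g}\otimes\mu_{g}$-null. The two computations are identical in substance, and your closing remark correctly identifies the crux: the Hilbert--Schmidt bound is precisely what upgrades the pointwise (Wiener) limit to the uniform, operator-norm limit asserted in \eqref{rage1}. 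Two small facts you assert without justification and should record for completeness: $R_{\tau}$ vanishes on the orthogonal complement of the cyclic subspace generated by $g$, so the norm computed in $L^{2}(\mu_{g})$ really is $\|R_{\tau}\|_{\mathcal{L}(H)}$; and $\mu_{g}$ is non-atomic because $\mu_{g}(\{\lambda\})=(g,E(\{\lambda\})g)=0$, the vector $E(\{\lambda\})g$ being an eigenvector of $A$ (or zero) and hence orthogonal to $g\in H_{c}$ by the very definition of $H_{c}$.
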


We apply Theorem \ref{rage} to $H= L^2(\Omega)$, $A = \sqrt{-\Delta_{N}}$, $C = \chi^2 G(-\Delta_{N})$, with $\chi \in C^{\infty}_{0}(\Omega)$, $\chi \geq 0$, $G\in C^{\infty}_{0}(0,\infty)$, $0\leq G\leq 1$. Taking $\tau=1/\e$ in \eqref{rage1} we have
$$\int_{0}^{T}\left\langle  \exp\left(-{\rm i} \frac{t}{\e} \sqrt{-\Delta_{N}}\right)\chi^{2}G(\sqrt{-\Delta_{N}})\exp\left({\rm i} \frac{t}{\e} \sqrt{-\Delta_{N}}\right) X;Y\right\rangle dt$$
$$\leq\omega(\e)\|X\|_{L^{2}(\Omega)}\|Y\|_{L^{2}(\Omega)},$$
where $\omega(\e)\to 0$ as $\e\to 0$. If we take $Y=G(\sqrt{-\Delta_{N}})[X]$ we deduce that
\begin{equation}
\int_{0}^{T}\left\|\chi G(\sqrt{-\Delta_{N}})\exp\left({\rm i} \frac{t}{\e} \sqrt{-\Delta_{N}}\right)[X] \right\|^{2}_{L^{2}(\Omega)}dt\leq \omega(\e)\|X\|^{2}_{L^{2}(\Omega)}
\label{rage2}
\end{equation}
for any $X\in L^{2}(\Omega)$, for more details see \cite{DFN12}.
The relation \eqref{rage2} implies the local decay to zero of $\Psi^{\e}$, namely
$$\|\nabla\Psi^{\e}\|_{L^{2}(K)}\to 0\quad \text{for any compact set $K\subset (0,T)\times\Omega$},$$
hence the strong compactness in space and time for the gradient part of $\bue$.

\bibliographystyle{amsplain}

\end{document}